\newdimen\mainfontsize \mainfontsize=1\@ptsize pt
\newcommand{\FL}{\mathcal{F}_L}
\newcommand{\Yi}{Y_i}
\newcommand{\AN}{A_N}
\newcommand{\ANc}{A_N^c}
\newcommand{\IAN}{1_{\AN}}
\newcommand{\IANc}{1_{\ANc}}
\newcommand{\PN}{P_N}
\newcommand{\LN}{L_N}
\newcommand{\Pl}{P^{(l)}}
\newcommand{\Bl}{B^{(l)}}
\newcommand{\El}{E^{(l)}}
\newcommand{\Elc}{E^{(l),c}}
\newcommand{\Bml}{B_m^{(l)}}
\newcommand{\Blc}{B^{(l),c}}
\newcommand{\Bmlc}{B_m^{(l),c}}
\newcommand{\Al}{A^{(l)}}
\newcommand{\Alc}{A^{(l),c}}
\newcommand{\Alminusone}{A^{(l-1)}}
\newcommand{\Alminusonec}{A^{(l-1),c}}
\newcommand{\Pk}{P^{(k)}}
\newcommand{\Po}{P^{(0)}}
\newcommand{\Pkminusone}{P^{(k-1)}}
\newcommand{\Plminusone}{P^{(l-1)}}
\newcommand{\Lkminusone}{L^{(k-1)}}
\newcommand{\Llminusone}{L^{(l-1)}}
\newcommand{\Ll}{L^{(l)}}
\newcommand{\Lk}{L^{(k)}}
\newcommand{\Lkm}{L^{(k-1)}_m}
\newcommand{\Llm}{L^{(l-1)}_m}
\newcommand{\Pkm}{P^{(k-1)}_m}
\newcommand{\Plm}{P^{(l-1)}_m}
\newcommand{\Pkbar}{\overline{P}^{(k-1)}}
\newcommand{\Plbar}{\overline{P}^{(l-1)}}
\newcommand{\eps}{\varepsilon}
\newcommand{\bn}{\mbox{$\mathbb N$}}
\newcommand{\bp}{\mbox{$\mathbb P$}}
\newcommand{\be}{\mbox{$\mathbb E$}}
\newcommand{\fm}{\mathcal{F}_L}
\newcommand{\ka}{K_j}
\newcommand{\EE}{\mathbb{E}}
\newcommand{\VV}{\mathbb{V}}
\title{Multilevel simulation of functionals of Bernoulli random variables with application to basket credit derivatives}
\author{K. Bujok\footnote{K. Bujok acknowledges support from EPSRC and Nomura via a CASE award, and from the Oxford-Man Institute.}, 
B. M. Hambly and C. Reisinger \vspace{0.2 cm} \\
\small Mathematical Institute, Oxford University \\ 
\small 24--29 St Giles, Oxford, OX1 3LB, UK\\ 
\small Email: \{bujok,\ hambly,\ reisinge@maths.ox.ac.uk\}}
\newtheorem{theorem}{Theorem}[section]
\newtheorem{proposition}{Proposition}[section]
\newtheorem{corollary}{Corollary}[section]
\newtheorem{rem}{Remark}[section]
\newtheorem{lemma}{Lemma}[section]
\begin{document}

\maketitle

%
%
%

%

\begin{abstract}
We consider $N$ Bernoulli random variables, which are independent conditional on a common random 
factor determining their probability distribution. We show that certain expected functionals of the 
proportion $L_N$ of variables in a given state converge at rate $1/N$ as $N\rightarrow \infty$.
Based on these results, we propose a multi-level simulation algorithm using a family of sequences with 
increasing length, to obtain estimators for these expected functionals with a mean-square error of 
$\epsilon^2$ and computational complexity of order $\epsilon^{-2}$, independent of $N$.
In particular, this optimal complexity order also holds for the infinite-dimensional limit.
Numerical examples are presented for tranche spreads of basket credit derivatives.
\end{abstract}

\noindent
{\bf Key words:} Multilevel Monte Carlo simulation, large deviations principle, exchangeability, basket credit derivatives

\section{Introduction} \label{Intro}

This article is concerned with the efficient numerical estimation of expectations of 
functionals of a large number, $N$, of exchangeable Bernoulli random variables. The objective of this work is thus two-fold:
to analyse the order of convergence in $1/N$ of expected functionals as $N$ tends to infinity, 
and to derive estimators for these expectations for which the computational
complexity is asymptotically independent of $N$.

We begin by analysing the convergence in the case of general Lipschitz and smooth functions, $p$,
of the average of $N$ exchangeable Bernoulli random variables as $N$ goes to infinity.
We then consider the case when $p$ has a certain piecewise linear structure and show that the convergence order
is the same as in the smooth case.
These results are relevant, for instance, if one wants to approximate the result for large but finite $N$ by its 
limit.
A number of applications come from the credit risk literature.
In \cite{Vasicek}, Vasicek derives an expression for the limiting distribution of portfolio losses in a Normal factor model,
where default of a firm is indicated by its value process being below a default barrier at maturity of the debt. In the large
portfolio limit, the randomness comes solely from a common market factor, while a law of large numbers holds for idiosyncratic
components conditionally on this factor.
Bush \emph{et al}., in \cite{thePaper}, extend this to a dynamic set-up where it is seen that the density of the limit empirical measure of firm values
satisfies a stochastic partial differential equation (SPDE) and can be used to approximate tranche spreads of basket
credit derivatives; \cite{MyFirstPaper} gives an extension to jump diffusion models while
\cite{GieseckeEtAl12} include extensions to heterogeneity and self-exciting defaults rendering the resulting equations
non-linear.
Further studies focus particularly on the tail of the limiting loss distribution, see \cite{DemboEtAl04}, \cite{GlassermanEtAl12} and the references therein.

A driving practical motivation for investigating the limiting behaviour is that the
original sequence of random variables is costly to simulate, because of the large number $N$ of underlying processes, often required over large time horizons.
Moreover, often many Monte Carlo samples are necessary for sufficiently accurate estimation of, for instance, expected tranche losses of credit basket.
This paper takes a different tack and develops a simulation method where the computational complexity is asymptotically independent of $N$.
A small tweak of the algorithm can also be used to approximate the limit obtained when $N$ goes to infinity.


More concretely, it turns out that an interpretation of the multi-level Monte Carlo approach (see \cite{MikeMultilevel})
in the present context allows us to construct estimators based on sequences with increasing lengths and a number
of samples which decreases faster than the length increases, such that the
overall computational complexity is essentially no larger than for fixed small $N$.

A conceptually similar though distantly related approach is used in \cite{schoenmakers11}, where the multilevel idea is applied to a sequence of martingales
to estimate a dual upper bound for the value of an early exercise option. 
In that setting
they are able to show, as we do here, that the achievable complexity is
not substantially larger than that of a non-nested simulation.
The general problem of estimating conditional expectations through nested multilevel simulation is 
addressed in \cite{nanchen12}. There,  further extrapolation is used to reduce the bias of estimators, 
while here we will propose an improved estimator which reduces the variance of higher level estimators.

This article is organised as follows. In Section \ref{Setup}, we introduce the setting and outline the main 
convergence results, explaining how they can be used to construct efficient estimators.
The first key result on the convergence order of expected functionals is proved in 
Section \ref{Convergence1}, with numerical illustrations from an example of a basket credit derivative
presented in Section \ref{Numerical}. In Section \ref{Multi}, we introduce in detail two multilevel simulation 
methods and derive bounds on their computational complexity to achieve a prescribed accuracy.
Finally, in Section \ref{MultiTests} we present numerical results illustrating the efficiency gains achieved 
through multilevel simulation in this context and Section \ref{Conclusion} discusses possible extensions.

\section{Set-up and main results} \label{Setup}

In this article, we are concerned with the behaviour of ``loss'' variables describing the fraction of $N$ random 
variables in a certain state, and expected functionals of this loss variable, as $N$ goes to infinity.
The application we have in mind, and for which we will present numerical illustrations, is that of a basket of 
defaultable firms, and then the loss is the fraction of firms which default over a certain period.

More precisely, on a probability space $(\Omega,\mathcal{F}, \mathbb{P})$, consider a sequence of 
Bernoulli random variables $\Yi$, $i\in \mathbb{N}$, and a random variable $L$ taking its values in 
$[0,1]$. If required we write $\Omega=\Omega_Y\times \Omega_L$ where canonically we could 
take $\Omega_Y=\{0,1\}^{\bn}$ and $\Omega_L = [0,1]$.
The probability measure $\bp$ is constructed as follows. The random variable $L$ is generated according to its
marginal law $\bp_L$ and then, conditional 
on $\FL$, the $\sigma$-algebra generated by $L$, the $Y_i$ are independent random variables with law given by 
\begin{equation}
\label{expectL}
\bp[\Yi= 1\vert \FL] = L.
\end{equation}
To re-iterate, the Bernoulli random variables are conditionally independent given a common factor.
Thus for each $n\in \bn$
\[ \bp(Y_1 = y_1, \dots, Y_n=y_n, L\in B) = \int_B l^{s_n}(1-l)^{n-s_n} \bp_L(L\in dl), \;\;\forall y_i\in\{0,1\}, B\subset [0,1]\]
where $s_n=\sum_{i=1}^n y_i$.
We will often write $\bp_{|L}=\bp(.\mid \FL)$ for the conditional law of the $Y_i$ given $\FL$ and $\be_{|L}$ for
the associated conditional expectation.
In the setting of defaultable firms, $\Yi = 1$ iff the $i$-th firm defaults, and $L$ is a global factor modelling 
the common tendency of firms to default. We define the loss variable to be the proportion of Bernoulli variables in state 1
\begin{equation} \label{loss}
L_N=\frac{1}{N} \sum_{i=1}^N \Yi.
\end{equation}
We consider a Lipschitz function $p$ and random variables $P$ and $P_N$ defined as
\begin{eqnarray} \label{trancheLoss0}
P&\equiv&p(L) ,\\
P_N &\equiv& p(L_N).
\label{trancheLossInfty}
\end{eqnarray}

In particular, we will study $p$ of the form
\begin{eqnarray}
\label{payoff}
p(l) \;\,\equiv\;\, [l-K_1]^{+}-[l-K_2]^{+} =
\left\{
\begin{array}{rl}
0 & l \le K_1, \\
l-K_1 & K_1 \le l \le K_2, \\
K_2-K_1 & l \ge K_2,
\end{array}
\right.
\end{eqnarray}
where $[x]^+=\max(x,0)$ denotes the positive part and $0 \le K_1 < K_2 \le 1$ are constants.
In credit derivative pricing, the particular shape of the function $p$ in (\ref{payoff}) measures the 
losses in a certain \emph{tranche} with attachment point $K_1$ and detachment point $K_2$, and its 
expectation is the building block for formulae for CDO tranche spreads.
A typical CDO pool consists of $N=125$ firms, while typical loan or mortgage books can have substantially 
more obligors, and it is therefore practically relevant to understand the behaviour of expected functionals 
for large $N$ and to devise computationally efficient estimators.

By a conditional version of the strong law of large numbers and the continuity of $p$
\begin{eqnarray} \label{lossInfty}
L_N &\rightarrow& L \quad \text{for } N \rightarrow \infty, \;\; \bp_{|L}-a.s., \\
P_N &\rightarrow& P \quad \text{for } N \rightarrow \infty, \;\; \bp_{|L}-a.s.
\label{payInfty}
\end{eqnarray}
This convergence will also hold in $L^2(\Omega_Y,\bp_{|L})$ (see Lemma~\ref{OldLemmaDwa}).

We study here the convergence rate of $P_N-P$ and
will prove the following two results. The first statement for Lipschitz and smooth functions $p$ is a relatively
straightforward consequence of (\ref{expectL}) and the easily computable $L^2$ convergence rate of $L_N$.
The second result shows that for a specific $p$ which is only piecewise smooth we can still obtain the same convergence order
as in the smooth case and with explicitly computable bounds.

\begin{theorem} \label{NewerTheorem}
Let $P$ and $P_N$ be defined by \eqref{trancheLoss0} and \eqref{trancheLossInfty}, respectively,
and assume that $p$ is Lipschitz with constant $c_p$.
We have that
\begin{eqnarray}
\label{alphapart}
\vert \be[P_N -P] \vert & \leq & \frac{c_p}{2 \sqrt{N}},\\ 
Var[P_N -P] & \leq &  \frac{c_p^2}{4N}.
\label{betapart}
\end{eqnarray}
If, moreover, $p$ is differentiable and the derivative has Lipschitz constant $C_p$, then
\begin{eqnarray}
\label{smoothest}
\vert \be[P_N -P] \vert & \leq & \frac{C_p}{8 N}.
\end{eqnarray}
\end{theorem}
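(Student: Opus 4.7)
The plan is to condition on $\FL$ throughout and exploit the fact that, given $\FL$, $L_N$ is the empirical mean of $N$ i.i.d.\ Bernoulli variables with parameter $L$, so that $\be_{|L}[L_N]=L$ and $\be_{|L}[(L_N-L)^2]=L(1-L)/N \le 1/(4N)$. Once these moments are in hand, all three bounds follow by combining them with Lipschitz/Taylor control of $p$ and then taking the outer expectation over $\FL$.

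For \eqref{alphapart}, I would write
\[
|\be[P_N-P]| \;\le\; \be[\,|p(L_N)-p(L)|\,] \;\le\; c_p \, \be[\,|L_N-L|\,],
\]
and then handle $\be[|L_N-L|]$ by conditioning on $\FL$ and applying the conditional Cauchy--Schwarz inequality to reduce to the second moment computed above; the pointwise bound $L(1-L) \le 1/4$ then gives $\be[|L_N-L|] \le 1/(2\sqrt{N})$. For \eqref{betapart}, the argument is even shorter: $\mathrm{Var}[P_N-P] \le \be[(P_N-P)^2] \le c_p^2\,\be[(L_N-L)^2]$, and tower/bias--variance for the conditional Bernoulli sum immediately yields $\be[(L_N-L)^2] = \be[L(1-L)]/N \le 1/(4N)$.

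For the sharper estimate \eqref{smoothest}, the key observation is that under conditional expectation the first-order term in the Taylor expansion of $p$ around $L$ vanishes. Since $p'$ is only Lipschitz (not necessarily $C^2$), I would use the integral remainder form
\[
p(L_N)-p(L) \;=\; p'(L)(L_N-L) + \int_0^1 \bigl(p'(L+t(L_N-L))-p'(L)\bigr)(L_N-L)\,dt,
\]
so the Lipschitz bound on $p'$ gives a remainder bounded in absolute value by $\tfrac{C_p}{2}(L_N-L)^2$. Conditioning on $\FL$ kills the linear term because $\be_{|L}[L_N-L]=0$, leaving
\[
\bigl|\be_{|L}[P_N-P]\bigr| \;\le\; \frac{C_p}{2}\,\be_{|L}[(L_N-L)^2] \;=\; \frac{C_p\,L(1-L)}{2N} \;\le\; \frac{C_p}{8N},
\]
and taking the outer expectation and applying $|\be[\cdot]| \le \be[|\cdot|]$ finishes the bound.

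The argument is almost entirely routine; the only place requiring a little care is the remainder step in \eqref{smoothest}, where one must avoid assuming $p\in C^2$ and instead use the integral-remainder Taylor identity with a Lipschitz bound on $p'$. Everything else is a clean conditioning argument on top of the elementary variance identity for a Binomial$(N,L)/N$ variable.
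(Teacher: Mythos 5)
Your proposal is correct and follows essentially the same route as the paper: conditioning on $\FL$ to get $\be_{|L}[(L_N-L)^2]=L(1-L)/N\le 1/(4N)$, deducing \eqref{alphapart} and \eqref{betapart} via Lipschitz control plus Cauchy--Schwarz/H\"older, and proving \eqref{smoothest} by noting the first-order Taylor term vanishes under $\be_{|L}$ while the Lipschitz-$p'$ remainder is bounded by $\tfrac{C_p}{2}(L_N-L)^2$. The only difference is cosmetic: you spell out the integral-remainder form of Taylor's theorem, which the paper simply asserts as the pointwise bound $|p(x)-p(y)-p'(x)(x-y)|\le\tfrac12 C_p(x-y)^2$.
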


\begin{theorem}
\label{OldTheorem}
For $p$ defined in (\ref{payoff}), if the cumulative density function (CDF) $F_L$ of $L$ is Lipschitz at $K_1>0$ and $K_2<1$ with Lipschitz constant $c_L$,
i.e.,
\begin{equation}
|F_L(K_j) - F_L(l)| \le c_L \, |K_j-l|
\end{equation}
for $j=1,2$ and all $l\in [0,1]$,
then 
\[ 
\vert \be[P_N -P] \vert  \leq  \frac{4c_L\sqrt{\pi}}{N}. 
\] 
\end{theorem}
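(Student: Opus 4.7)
The plan is to write $p(l) = (l-K_1)^+ - (l-K_2)^+$ and bound the two contributions to $|\be[P_N - P]|$ separately; it therefore suffices to show, for a single threshold $K\in\{K_1,K_2\}$ at which $F_L$ is Lipschitz with constant $c_L$, that $|\be[(L_N-K)^+ - (L-K)^+]| = O(1/N)$ with an explicit constant. Summing the two contributions then yields the theorem.

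The key observation is a put--call parity identity conditional on $\FL$. Because $\be[L_N\mid\FL]=L$ by \eqref{expectL}, the identity $(L_N-K)^+ - (K-L_N)^+ = L_N - K$ gives
\[
\be[(L_N-K)^+\mid\FL] - \be[(K-L_N)^+\mid\FL] = L-K,
\]
so that the conditional bias collapses to
\[
\be[(L_N-K)^+\mid\FL] - (L-K)^+ = \begin{cases} \be[(L_N-K)^+\mid\FL] & \text{if } L \le K, \\ \be[(K-L_N)^+\mid\FL] & \text{if } L > K. \end{cases}
\]
Both expressions are non-negative, so the bias has a definite sign and only an upper bound is needed.

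I would then write each conditional expectation as a tail integral, $\be[(L_N-K)^+\mid L=l] = \int_0^\infty \bp(L_N \ge K+t\mid L=l)\,dt$, and apply Hoeffding's inequality to $NL_N\mid\FL \sim \mathrm{Bin}(N,L)$. After the substitution $s = K - l + t$ this gives, for $l \le K$,
\[
\be[(L_N-K)^+\mid L=l] \;\le\; \int_{K-l}^\infty e^{-2Ns^2}\,ds,
\]
and symmetrically for $l > K$. Finally, integrating against $dF_L(l)$ and swapping the order by Fubini, the Lipschitz hypothesis controls the inner measure by $F_L(K)-F_L(K-s) \le c_L s$ (and similarly on the upper side), reducing the double integral to a constant multiple of $c_L\int_0^\infty s\, e^{-2Ns^2}\,ds = c_L/(4N)$. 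Summing the four contributions (two tails per threshold, two thresholds) gives an $O(1/N)$ bound.

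The main step that requires care is matching the exact constant $4\sqrt{\pi}$: Hoeffding as sketched above already yields the $1/N$ rate with a tighter numerical constant, but if one instead bounds the binomial tail by a Gaussian one, the identity $\int_0^\infty e^{-Ns^2}\,ds = \tfrac{1}{2}\sqrt{\pi/N}$ naturally produces the $\sqrt{\pi}$ factor appearing in the statement. Either route exploits the same structural fact: a concentration tail of width $1/\sqrt{N}$ is integrated against $F_L$-mass of width $1/\sqrt{N}$ near the threshold, producing the $1/N$ rate.
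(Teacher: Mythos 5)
Your argument is correct, and it is a genuinely different route from the one in the paper. The paper never splits $p$ into two calls: it works with the events $\AN$ that $L_N$ and $L$ fall in the same linearity interval, uses the conditional martingale property to reduce the on-event term to an off-event term (Lemma \ref{OldLemmaJeden}), bounds $\bp_{|L}[\ANc]$ by a Chernoff/rate-function estimate from Dembo--Zeitouni (Lemma \ref{OldLemmaCztery}), and then combines Cauchy--Schwarz with the conditional $L^2$ bound of Lemma \ref{OldLemmaDwa} and an integration-by-parts/Lipschitz computation (Lemma \ref{OldLemmaPiecA}); the $\sqrt{\pi}$ in the constant comes precisely from integrating the square roots of the exponential tails. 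You instead exploit convexity: conditional put--call parity gives the bias of each call a definite sign and identifies it with $\be_{|L}[(L_N-K)^+]$ or $\be_{|L}[(K-L_N)^+]$ on the respective side of $K$, which you then control by the tail-integral representation, Hoeffding, and Fubini against the Lipschitz mass of $F_L$ near $K_j$. This is more elementary (no large-deviations rate function, no Cauchy--Schwarz), uses the Lipschitz hypothesis only on intervals anchored at $K_1,K_2$ exactly as stated, and in fact yields the sharper bound $c_L/N$, which implies the stated $4c_L\sqrt{\pi}/N$ a fortiori -- so there is no need to "match" the paper's constant, and your closing remark about recovering $\sqrt{\pi}$ is unnecessary (in your integral the Gaussian-type tail gives $\int_0^\infty s\,e^{-Ns^2}ds=1/(2N)$, still without a $\sqrt{\pi}$; that factor is an artefact of the paper's Cauchy--Schwarz step). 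Two small points to tidy up: when $s>K_j$ (resp.\ $K_j+s>1$) the interval in the Fubini step hits the endpoint of $[0,1]$, where $F_L$ may have an atom; this is harmless because on $\{L=0\}$ and $\{L=1\}$ one has $L_N\equiv L$ and the conditional bias vanishes, so these atoms contribute nothing, while the remaining mass is still bounded by $c_L s$ via the Lipschitz property at $K_j$ -- worth one sentence, on a par with the paper's remark about the degenerate cases. Finally, note what the paper's heavier machinery buys: the event/Cauchy--Schwarz framework and Lemma \ref{OldLemmaPiecA} are reused for the variance estimate of Theorem \ref{NewTheorem}, which your parity trick, being a statement about the signed bias only, does not address.
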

Note that if $L$ has a density function which is bounded, then the CDF is certainly Lipschitz. The fact that we
only need the Lipschitz property at $K_1$ and $K_2$ will be useful for the applications considered later.

Taking the two Theorems together, order 1 for the convergence of expectations also follows for piecewise smooth
$p$ which are Lipschitz overall, provided $F_L$ is Lipschitz.


These Theorems show that expected functionals for large or infinite $N$ can be successively approximated by those
with smaller $N$. Combining this with a control variate idea leads to multilevel simulation with a substantial
variance reduction for large $N$. Specifically, the above results imply that for Lipschitz $p$ we have 
$|\be [P_N - P_{M N}]| \le c_1/\sqrt{N}$ and 
$Var[P_N -P_{M N}] \le c_2/N$ for any positive integer $M$ with 
some constants $c_1$ and $c_2$.
We can consider a sequence $N_l=M^l$, $l\in \mathbb{N}$, with corresponding $\Ll=L_{N_l}$ and $\Pl=P_{N_l}$.
Translating the central idea in \cite{MikeMultilevel} to this setting, we use the decomposition
\begin{equation}
\label{telescope}
\be [\Pl] = \be [\Po] +  \sum_{k=1}^l \be [\Pk - \Pkminusone]
\end{equation}
and estimate every summand $\be [\Pk - \Pkminusone]$ separately by defining estimators
\begin{eqnarray}
\label{mlestisimple}
Z_l \equiv n_l^{-1} \sum_{j=1}^{n_l} \left( P^{(l,j)}-P_{c}^{(l,j)} \right),
\end{eqnarray}
where `c' denotes a `coarse' estimator on level $l$, i.e., using only $N_{l-1}$ instead of $N_l$ Bernoulli random variables, precisely,
\begin{eqnarray}
\label{fineP}
\label{lfine}
P^{(l,j)} &=& p(L^{(l,j)}), \quad \text{where } L^{(l,j)} = N_l^{-1} \sum_{i=1}^{N_l} Y_i^{(l,j)}, \\
P_c^{(l,j)} &=& p(L_c^{(l,j)}), \quad \text{where } L_c^{(l,j)} = N_{l-1}^{-1} \sum_{i=1}^{N_{l-1}} Y_i^{(l,j)},
\end{eqnarray}
where $Y_i^{(l,j)}$, $j=1,\ldots,n_l$, are independent samples of $Y_i$ for fixed level $l$ and independent across levels.
They are constructed from a loss factor $L^{(l,j)}$ (with the same distribution as $L$, independent across $l$ and $j$) 
in the same way that $Y_i$ is constructed from $L$.

The number of samples on each level, $n_l$, can be chosen to obtain an optimal allocation of computational cost for a given overall mean-square error (MSE).
The general construction in \cite{MikeMultilevel} immediately gives the following result.

\newcommand{\fracs}[2]{{ \textstyle \frac{#1}{#2} }}

\begin{proposition}[cf. \cite{MikeMultilevel}, Theorem 3.1]
\label{ml-theorem}
Let $P$, $\Pl$ as above.
If there exist independent estimators $Z_l$ based on $n_l$ Monte Carlo 
samples, and positive constants 
$\alpha, \beta, c_1, c_2, c_3$ such that 
$\alpha\!\geq\!\fracs{1}{2}$ and
\begin{enumerate}[label=\roman{*})]
\item
\label{alph}
$\displaystyle
\left| \EE[\Pl \!-\! P] \right| \leq c_1\, M^{-\alpha\, l}
$
\item
\label{cons}
$\displaystyle
\EE[Z_l] = \left\{ \begin{array}{ll}
\EE[\Po], & l=0 \\[0.1in]
\EE[\Pl \!-\! \Plminusone], & l>0
\end{array}\right.
$
\item
\label{bet}
\label{ml-iii}
$\displaystyle
\VV[Z_l] \leq c_2\, n_l^{-1} M^{-\beta\, l}
$
\item
\label{compl}
\label{ml-iv}
$\displaystyle
C_l \leq c_3\, n_l\, N_l,
$
where $C_l$ is the computational complexity of $Z_l$
\end{enumerate}
then there exists a positive constant $c_4$ such that for any $\eps \!<\! e^{-1}$
there are values $K$ and $n_l$ for which the multilevel estimator
\begin{eqnarray}
\label{multiest}
G_K = \sum_{l=0}^K Z_l,
\end{eqnarray}
has a mean-square-error with bound
\[
MSE \equiv \EE\left[ \left(G_K - E[P]\right)^2\right] < \eps^2
\]
with a computational complexity $C$ with bound
\[
C \leq \left\{\begin{array}{ll}
c_4\, \eps^{-2}              ,    & \beta>1, \\[0.1in]
c_4\, \eps^{-2} (\log \eps)^2,    & \beta=1, \\[0.1in]
c_4\, \eps^{-2-(1\!-\!\beta)/\alpha}, & 0<\beta<1.
\end{array}\right.
\]
\end{proposition}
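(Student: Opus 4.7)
The plan is to follow the standard multilevel Monte Carlo analysis from \cite{MikeMultilevel}. Exploiting the independence of the $Z_l$'s and the telescoping property (ii), I would first decompose the MSE as squared bias plus total variance,
\[
MSE = \bigl(\EE[P^{(K)}]-\EE[P]\bigr)^2 + \sum_{l=0}^K \VV[Z_l].
\]
Assumption (i) bounds the first term by $c_1^2 M^{-2\alpha K}$, so choosing $K = \lceil \alpha^{-1} \log_M(\sqrt{2}\,c_1/\eps)\rceil$ forces the squared bias below $\eps^2/2$ and fixes the highest level required by the algorithm.

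The remaining $\eps^2/2$ is the variance budget, and I would pick the sample sizes $n_l$ to minimise the total cost $\sum_l c_3 n_l N_l$ subject to the variance constraint $\sum_l c_2 n_l^{-1} M^{-\beta l}\le \eps^2/2$. A one-line Lagrange-multiplier calculation, with $V_l = c_2 M^{-\beta l}$ and $N_l = M^l$, gives the optimal allocation $n_l \propto \sqrt{V_l/N_l}$; substituting this back bounds the continuous part of the total cost by
\[
2\,c_2 c_3 \eps^{-2} \left( \sum_{l=0}^K M^{(1-\beta)l/2} \right)^{2}.
\]

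The three regimes in the statement then come directly from the behaviour of this geometric sum: it is uniformly bounded for $\beta > 1$; equals $K+1 = O(\log \eps^{-1})$ for $\beta = 1$; and is of order $M^{(1-\beta)K/2} = O(\eps^{-(1-\beta)/(2\alpha)})$ for $\beta < 1$, once $M^K = O(\eps^{-1/\alpha})$ is used. Squaring gives the three cost bounds claimed. The remaining point to check is the overhead from rounding each $n_l$ up to an integer, which contributes at most $c_3 \sum_{l=0}^K N_l = O(\eps^{-1/\alpha})$; this is dominated by $\eps^{-2}$ precisely because $\alpha \ge 1/2$, and this is the only place the lower bound on $\alpha$ is used.

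The main obstacle is therefore not conceptual but one of careful bookkeeping: tracking the constants across the three regimes of $\beta$, verifying that the integer-rounded allocation still satisfies both the variance budget and the cost bound, and checking that the hypothesis $\alpha\ge 1/2$ is exactly what is needed to absorb the ceiling-function overhead into the leading $\eps^{-2}$ term.
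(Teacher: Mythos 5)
Your proposal is correct and follows exactly the argument the paper relies on: the paper does not reprove this proposition but obtains it by substituting $h_l = N_l^{-1}$ into Theorem 3.1 of \cite{MikeMultilevel}, and your sketch (bias/variance split of the MSE, choice of $K$ from assumption i), Lagrange-optimal $n_l \propto \sqrt{V_l/N_l}$, the three regimes from the geometric sum, and absorbing the integer-rounding overhead using $\alpha \ge \tfrac12$) is precisely Giles's proof of that theorem. No gaps to report.
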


The above result is meaningful only in situations where it is not possible or practical to sample from $L$ directly, as otherwise
$\be[P]=\be[p(L)]$ could be computed with complexity $O(\epsilon^{-2})$ in the standard Monte Carlo way.

Moreover, in some situations it is not $p(L)$ which is of interest, but $p(L_N)$ for large but finite $N$, and then it is essential to have
a method to estimate $\be[L_N]$ in a complexity which does not increase sharply in $N$.

For instance, take $N$ given and estimate $P_N$ with the standard (i.e., single level) Monte Carlo estimator
\begin{eqnarray*}
\widehat{P}_{N} &\equiv& \frac{1}{n} \sum_{j=1}^n p\left(\frac{1}{N} \sum_{i=1}^N Y_i^{(j)} \right),
\end{eqnarray*}
where $n$ is the number of samples and the $(Y_i^{(j)})$, for different $j$, are independent samples of $(Y_i)$. Then $\be[\widehat{P}_N]=\be[P_N]$ and
$Var[\widehat{P}_{N}] = \frac{1}{n} Var[P_N]$,
where it follows from
\[
Var[P] \le 2\left( Var[P_N] + Var[P_N-P]\right),
\]
under the conditions of either Theorem \ref{NewerTheorem} or Theorem \ref{OldTheorem}, that
\begin{eqnarray*}
Var[{P}_{N}] \ge Var[P]/2 - Var[P_N-P]  \ge Var[P]/2 - c_1/N \ge c_2 Var[P],
\end{eqnarray*}
for $N$ sufficiently large and some constants $c_1, c_2$ independent of $N$. That is to say, the variance of $P_N$ and subsequently that of the estimator $\widehat{P}_{N}$ is bounded below with a positive number independent of $N$. Hence, if a MSE of $\epsilon^2$ is required for $\be[P_N]$, the complexity is
\[
C \ge n N \ge c_2 N/\epsilon^2,
\]
i.e., increases (at least) linearly in $N$. (A similar argument shows that this is also an upper bound.)

If one wants to use $\widehat{P}_{N}$ not as an estimator to $\be[P_N]$ but $\be[P]$, a bias occurs and
\begin{eqnarray*}
MSE = \be[\widehat{P}_{N} - P]^2 + Var[\widehat{P}_{N}] =
\be[{P}_{N} - P]^2 + Var[\widehat{P}_{N}] = O(N^{-2/\alpha}) + O(n^{-1}),
\end{eqnarray*}
assuming the bias is of order $\alpha$ as in Proposition \ref{ml-theorem}.
To reduce the bias and hence the error, $N$ has to be increased simultaneously with $n$.
More precisely, for MSE $\epsilon^2$ it is optimal to choose $N=O(\epsilon^{-1/\alpha})$ and $n=O(\epsilon^{-2})$, leading to a computational complexity
\[
C = O(n N) = O\left(\epsilon^{-2-1/\alpha} \right).
\]


The following Corollary addresses both cases of large finite and infinite $N$ and improves on the convergence rates of the standard Monte Carlo estimator.

\begin{corollary}
\label{complex1}
Let $P_N$ and $P$ be as in (\ref{trancheLoss0}) and (\ref{trancheLossInfty}), and assume $p$ is Lipschitz.
\begin{enumerate}
\item
There is a multilevel estimator for $\be[P]$ with MSE $\epsilon^2$ with computational complexity 
$C \le c_4 (\log\epsilon)^2 \epsilon^{-2}$.
\item
For all $N$, there is a multilevel estimator for $\be[P_N]$ with MSE $\epsilon^2$ with computational complexity 
$C \le c_4 (\log\epsilon)^2 \epsilon^{-2}$, where $c_4$ is independent of $N$.
\end{enumerate}
\end{corollary}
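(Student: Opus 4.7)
The plan is to apply Proposition~\ref{ml-theorem} directly, so the task reduces to constructing a coupled MLMC estimator $Z_l$ at each level and verifying the four hypotheses with $\alpha=1/2$ and $\beta=1$. I would set $N_l = M^l$ and, for each independent Monte Carlo replicate $m$ at level $l$, draw a common factor $L^{(m)}\sim\bp_L$ together with Bernoulli$(L^{(m)})$ variables $Y_1^{(m)},\ldots,Y_{N_l}^{(m)}$. Taking $\Pl^{(m)} = p\bigl(N_l^{-1}\sum_{i=1}^{N_l} Y_i^{(m)}\bigr)$ and $\Plminusone^{(m)} = p\bigl(N_{l-1}^{-1}\sum_{i=1}^{N_{l-1}} Y_i^{(m)}\bigr)$, I would define $Z_l = n_l^{-1}\sum_m (\Pl^{(m)} - \Plminusone^{(m)})$. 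The telescoping identity (ii) and the $O(N_l)$ cost per sample (iv) hold by construction.

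Hypothesis (i) is precisely Theorem~\ref{NewerTheorem}, which gives $|\be[\Pl - P]| \le (c_p/2)\,M^{-l/2}$, so $\alpha=1/2$. For (iii), the Lipschitz bound on $p$ combined with the conditional variance $\be[(L_N - L)^2\mid \FL] = L(1-L)/N \le 1/(4N)$ yields
\[
\VV[\Pl - \Plminusone] \le 2c_p^2\bigl(\be[(L_{N_l}-L)^2] + \be[(L_{N_{l-1}}-L)^2]\bigr) \le c_p^2 M \cdot M^{-l},
\]
so $\VV[Z_l] \le c_2\,n_l^{-1} M^{-l}$ with $c_2 = c_p^2 M$, delivering $\beta = 1$. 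The middle case of Proposition~\ref{ml-theorem} then gives cost $c_4(\log\eps)^2\eps^{-2}$, proving part~(1).

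For part~(2) the target is $\be[P_N]$ at finite $N$, and the issue is keeping $c_4$ independent of $N$. I would truncate the telescope at $K^\star := \lceil 2\log_M \eps^{-1}\rceil \wedge \lceil \log_M N \rceil$. If $K^\star = \lceil \log_M N\rceil$ (small-$N$ regime), $\sum_{l=0}^{K^\star} Z_l$ is already unbiased for $\be[P_N]$ (a trivial adjustment of the top level covers $N$ that is not a power of $M$); if $K^\star = \lceil 2\log_M \eps^{-1}\rceil$ (large-$N$ regime), Theorem~\ref{NewerTheorem} controls the residual bias through $\be[P]$ via
\[
|\be[P_{N_{K^\star}}] - \be[P_N]| \le |\be[P_{N_{K^\star}} - P]| + |\be[P - P_N]| \le \frac{c_p}{2}\bigl(M^{-K^\star/2} + N^{-1/2}\bigr) \le c_p\,\eps,
\]
contributing $O(\eps^2)$ to the MSE. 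In either regime $K^\star \le 2\log_M \eps^{-1}$, so the $\beta=1$ sample allocation $n_l \propto M^{-l}$ gives total cost $O((K^\star)^2 \eps^{-2}) = O((\log\eps)^2\eps^{-2})$ with a constant depending only on $M$ and $c_p$.

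The main obstacle is exactly this $N$-independence in part~(2): a naive analysis that refines all the way to $N_K = N$ produces a constant growing like $(\log N)^2$. The resolution is the observation that once $N_{K^\star} \gtrsim \eps^{-2}$ the bias $|\be[P_{N_{K^\star}}] - \be[P]|$ is already below $\eps$, so further refinement is wasteful and $K^\star$ can always be capped at $O(\log \eps^{-1})$, independent of $N$.
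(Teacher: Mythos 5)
Your proposal is correct and takes essentially the same route as the paper: your $Z_l$ is exactly the estimator (\ref{mlestisimple})--(\ref{fineP}), and you verify hypotheses (i)--(iv) of Proposition \ref{ml-theorem} with $\alpha=1/2$ and $\beta=1$ via Theorem \ref{NewerTheorem} and the conditional variance bound (the paper's Proposition \ref{ThmPl}), so the $\beta=1$ case gives part (1). For part (2) the paper only remarks that the claim follows from the proof of Theorem 3.1 in \cite{MikeMultilevel} by bounding the top level a priori; your explicit truncation at $K^\star = \lceil 2\log_M \eps^{-1}\rceil \wedge \lceil \log_M N\rceil$ with the triangle-inequality bias bound through $P$ is precisely that argument written out (up to the routine rescaling of constants needed to make the bias and variance contributions sum to at most $\eps^2$).
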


Note that only order $1/2$ is required for the convergence of expectations in Proposition \ref{ml-theorem}, \ref{alph},
and that the complexity is then dictated by $\beta$, the case $\beta=1$ implied by Theorem \ref{NewerTheorem} for
all Lipschitz payoffs being a boundary case.

The estimators for both $\be[P]$ and $\be[P^{(L)}] = \be[P_{N_L}]$, for $N_L = M^L$ fixed, are given by (\ref{multiest}). In the first case,
the maximum level $K$ and the number of samples $n_l$ on each level have to be increased successively as part of the simulation algorithm
until a desired MSE is reached, as
explained in \cite{MikeMultilevel}.
In the second case, a similar procedure can be used but $K$ is  not increased further once the desired level $L$ is reached.
By construction, at that point, the total MSE is small enough that no additional samples need to be generated. This algorithm is formalised at the start of Section \ref{MultiTests}.

\bigskip

For the specific $p$ as in (\ref{payoff}), we can exploit the piecewise linearity of $p$ to construct multilevel estimators with even 
better complexity, by making the following observations: The summands in (\ref{telescope}) are 
unchanged if we replace $\Pkminusone = p(\Lkminusone)$ with any of $p(\Lkm)$ for  $m=1,\ldots,M$, where
\begin{eqnarray}
\label{vecest}
\Lkm \equiv \frac{1}{N_{k-1}} \sum_{i=1}^{N_{k-1}} Y_{i+(m-1) N_{k-1}}.
\end{eqnarray}
This is a direct consequence of the exchangeability. Now,
\begin{eqnarray}
\label{LlMRel}
\Lk = \frac1M \sum_{m=1}^M \Lkm
\end{eqnarray}
and, if all $\Lkm$ lie in the same interval $[0,K_1]$, $(K_1,K_2]$ or $(K_2,1]$, also
$\Pk = \Pkbar$, where
\begin{eqnarray}
\label{pbar}
\Pkbar \equiv \frac1M \sum_{m=1}^M \Pkm = \frac1M \sum_{m=1}^M p(\Lkm),
 \end{eqnarray}
since $p$ is linear in these intervals.
Because of $\be[\Pkminusone] = \be[\Pkbar]$, we can now write
\begin{equation}
\label{telescope2}
\be[\Pl] = \be[\Po] +  \sum_{k=1}^l \be[\Pk - \Pkbar],
\end{equation}
and estimate the individual terms in the sum independently in the multilevel spirit, i.e., with estimators
\begin{eqnarray}
\overline{Z}_l \equiv n_l^{-1} \sum_{j=1}^{n_l} \left(P^{(l,j)}-\overline{P}^{(l,j)} \right),
\label{mlestiimpr}
\end{eqnarray}
where $P^{(l,j)}$ is defined as in (\ref{fineP}), but instead of $P^{(l,j)}_c$ we use
\begin{eqnarray}
\overline{P}^{(l,j)} &=& M^{-1} \sum_{m=1}^M p(L_m^{(l,j)}), \quad \text{where } 
\label{lscoarse}
L_m^{(l,j)} = N_{l-1}^{-1} \sum_{i=1}^{N_{l-1}} Y_{i + (m-1) N_{l-1}}^{(l,j)},
\end{eqnarray}
and where the rest of the set-up is as earlier.

There is only a variance contribution from a specific sample of the $k$-th term if at least two $\Pkm$ lie in different intervals.
For large $k$, the probability of this is small, and we will be able to show the following result.

\begin{theorem} \label{NewTheorem}
For $p$ as in (\ref{payoff}), let $\Pl$ as in Proposition \ref{ml-theorem} and $\Plbar$ as in (\ref{pbar}).
If the CDF $F_L$ of $L$ is Lipschitz with Lipschitz constant $c_L$, then
\begin{eqnarray}
Var[\Pl -\Plbar] & \leq &  \frac{c_2}{N_l^{3/2}},
\end{eqnarray} 
where 
$c_2=c_L \; 4\sqrt{M\pi} (\sqrt{2}+\sqrt{M}) \sqrt{\frac{7}{8} (M^2+6M+1)}$.
\end{theorem}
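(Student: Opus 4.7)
The first step is to exploit the piecewise-linear structure of $p$. Since $p$ is linear on each of the three convex intervals $I_1=[0,K_1]$, $I_2=(K_1,K_2]$, $I_3=(K_2,1]$, if all $\Llm$, $m=1,\ldots,M$, lie in the same $I_j$, then $\Ll=\frac{1}{M}\sum_m \Llm$ lies in $I_j$ too and by linearity $\Pl=p(\Ll)=\frac{1}{M}\sum_m p(\Llm)=\Plbar$. Setting $E=E_1\cup E_2$ with $E_j=\{\min_m \Llm \le K_j < \max_m \Llm\}$, we have $\Pl-\Plbar=0$ on $E^c$ and hence $\VV[\Pl-\Plbar] \le \EE[(\Pl-\Plbar)^2\, 1_E]$. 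I would then apply Cauchy--Schwarz conditionally on $\FL$:
\[
\EE[(\Pl-\Plbar)^2\, 1_E] \le \EE\Bigl[\sqrt{\EE[(\Pl-\Plbar)^4\mid\FL]}\,\sqrt{\bp(E\mid\FL)}\Bigr],
\]
and replace the first factor by its supremum in $L$ outside the expectation. The point of this ordering is that the unconditional $\bp(E)$ is only $O(N_l^{-1/2})$, whereas the pointwise concentration of $\bp(E\mid\FL)$ in a window of width $O(N_{l-1}^{-1/2})$ around $L=K_j$ will give $\EE[\sqrt{\bp(E\mid\FL)}]=O(N_l^{-1/2})$, which is the extra half-power needed on top of the $N_l^{-1}$ coming from the fourth-moment factor.

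For the fourth moment I would use that $p$ is $1$-Lipschitz, so $|\Pl-\Plbar|\le\frac{1}{M}\sum_m|\Ll-\Llm|$, and then Jensen to obtain $(\Pl-\Plbar)^4 \le \frac{1}{M}\sum_m(\Ll-\Llm)^4$. Conditional on $\FL$, $\Ll-\Llm = \sum_i c_i(\Yi-L)$ is a weighted sum of $N_l$ i.i.d.\ centered Bernoullis with $c_i=1/N_l$ outside the $m$-th block and $c_i=(1-M)/N_l$ inside, so the identity $\EE[(\sum c_i X_i)^4] = 3\sigma^4(\sum c_i^2)^2 + (\mu_4-3\sigma^4)\sum c_i^4$, with $\sigma^2=L(1-L)$ and $\mu_4-3\sigma^4 = L(1-L)(1-6L(1-L))$, produces an explicit bound $\EE[(\Pl-\Plbar)^4\mid\FL]\le\kappa(M)/N_l^2$ after maximising in $L\in[0,1]$; this is where the $\frac{7}{8}(M^2+6M+1)$ factor of $c_2$ will enter. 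For the probability factor, conditional independence of the $\Llm$ given $\FL$ yields $\bp(E_j\mid\FL) = 1 - q_j^M - (1-q_j)^M$ with $q_j = \bp(\Llm > K_j \mid\FL)$, and Bernoulli's inequality $(1-x)^M \ge 1-Mx$ gives $\bp(E_j\mid\FL) \le M\min(q_j, 1-q_j)$. Hoeffding's inequality for the binomial $N_{l-1}\Llm$ then yields $\min(q_j, 1-q_j) \le e^{-2N_{l-1}(L-K_j)^2}$, whence $\sqrt{\bp(E_j\mid\FL)} \le \sqrt{M}\,e^{-N_{l-1}(L-K_j)^2}$. Integrating against $dF_L\le c_L\,dl$ via the Gaussian integral $\int e^{-N_{l-1}u^2}du = \sqrt{\pi/N_{l-1}}$ and summing over $j=1,2$ using $\sqrt{a+b}\le\sqrt{a}+\sqrt{b}$ gives $\EE[\sqrt{\bp(E\mid\FL)}] \le 2c_L\sqrt{M\pi/N_{l-1}} = 2c_L M\sqrt{\pi/N_l}$, which multiplied by $\sqrt{\kappa(M)}/N_l$ yields the $O(N_l^{-3/2})$ rate.

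The hard part will be the bookkeeping needed to recover the precise constant $c_2$, rather than any conceptual difficulty; in particular, the factor $(\sqrt{2}+\sqrt{M})$ in the statement strongly suggests that the two terms $3\sigma^4(\sum c_i^2)^2$ and $(\mu_4-3\sigma^4)\sum c_i^4$ of the fourth-moment identity must be carried separately and combined via $\sqrt{a+b}\le\sqrt{a}+\sqrt{b}$ before being paired with the Gaussian integral, rather than collapsed into a single uniform estimate as sketched above.
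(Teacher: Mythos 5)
Your argument is correct and shares the paper's skeleton: the contribution on the event that all $M$ sub-averages $\Llm$ lie in one linearity interval vanishes by \eqref{LlMRel}, and the remaining term is bounded by conditional Cauchy--Schwarz as $(\be_{|L}[(\Pl-\Plbar)^4])^{1/2}\,(\bp_{|L}[\Elc])^{1/2}$, the first factor being $O(N_l^{-1})$ uniformly in $L$ and the second $O(N_l^{-1/2})$ after integrating against the Lipschitz CDF. Where you genuinely differ is in how the two factors are estimated. For the probability, the paper works with events relative to $L$: it uses $\Elc\subseteq\bigcup_m \Bmlc$, then $\bp_{|L}(\Blc)\le 2\,\bp_{|L}(\Alc)+\bp_{|L}(\Alminusonec)$ (Lemma \ref{thmb2}) and Lemma \ref{OldLemmaPiecA}; this is in fact where the factor $(\sqrt{2}+\sqrt{M})$ originates (a $\sqrt{2}$ from the two level-$l$ deviation terms and a $\sqrt{M}$ from the level-$(l-1)$ term), so your closing speculation about its provenance in the fourth-moment identity is mistaken, though immaterial. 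You instead exploit the conditional i.i.d.\ structure of the $\Llm$ directly, via $\bp(E_j\mid\FL)\le M\min(q_j,1-q_j)$ and Hoeffding, obtaining $2c_L M\sqrt{\pi/N_l}$, which is smaller than the paper's $4c_L\sqrt{M\pi}(\sqrt{2}+\sqrt{M})/\sqrt{N_l}$. For the fourth moment, the paper reduces by Jensen and exchangeability to $\be_{|L}[(\Pl-\Plminusone)^4]$ and expands around $P$ (Lemma \ref{nowelemma}, giving $\tfrac{7}{8}(M^2+6M+1)N_l^{-2}$), whereas your direct computation for the weighted sum with coefficients $1/N_l$ and $(1-M)/N_l$ gives a leading term of order $3(M-1)^2/(16N_l^2)$ plus an $O(N_l^{-3})$ correction, again smaller. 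Since both of your estimates are dominated by the paper's, your route proves the theorem with the stated $c_2$ (indeed with a sharper constant); the one small point to make explicit is that "integrating against $dF_L\le c_L\,dl$" uses that a globally Lipschitz CDF has a density bounded by $c_L$ (no atoms), which is precisely the hypothesis of this theorem, in contrast to the weaker local assumption in Theorem \ref{OldTheorem}.
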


Here and throughout the paper we give explicit expressions for the constants. These should not be regarded as 
optimal in any sense.

\begin{corollary}
For Lipschitz $F_L$ and $p$ as in  (\ref{payoff}),
there is a constant $c_5$ and multilevel estimators for $\be[P]$ and $\be[P_N]$ with MSE $\epsilon^2$ with computational complexity $C\le c_5 \,\epsilon^{-2}$.
\end{corollary}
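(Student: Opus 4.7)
The plan is to apply Proposition \ref{ml-theorem} to the variance-reduced telescoping decomposition \eqref{telescope2}, with the naive increment $\Pk - \Pkminusone$ replaced by $\Pk - \Pkbar$. Accordingly, I would take the level-$0$ estimator $Z_0$ to be the empirical mean of $n_0$ independent copies of $\Po$, and for $k \geq 1$ the level-$k$ estimator $Z_k$ to be the empirical mean of $n_k$ independent copies of $\Pk - \Pkbar$, where each copy is generated from a single joint simulation (one draw of the common factor $L$, followed by $N_k$ conditionally independent Bernoulli variables $Y_1,\ldots,Y_{N_k}$) so that $\Pk$ and $\Pkbar$ are built from the same underlying sample via \eqref{LlMRel}--\eqref{pbar}.

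Next, I would verify the four hypotheses of Proposition \ref{ml-theorem} in turn. Condition (i) follows from Theorem \ref{OldTheorem}, which yields $|\EE[\Pl - P]| \leq 4 c_L \sqrt{\pi}\, M^{-l}$, so $\alpha = 1 \geq 1/2$. Condition (ii) rests on the exchangeability of the $\Yi$: each subgroup average $\Lkm$ in \eqref{vecest} has the same law as $\Lkminusone$, so $\EE[\Pkm] = \EE[\Pkminusone]$ for each $m$, hence $\EE[\Pkbar] = \EE[\Pkminusone]$, so the telescope \eqref{telescope2} is indeed correctly estimated by $\sum_l Z_l$. Condition (iii) is precisely the statement of Theorem \ref{NewTheorem}, giving $\beta = 3/2$. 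Condition (iv) is immediate, since a single sample of $(\Pk, \Pkbar)$ costs $O(N_k)$.

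Since $\beta = 3/2 > 1$, the first branch of the complexity bound in Proposition \ref{ml-theorem} applies and yields $C \leq c_5\, \epsilon^{-2}$, with $c_5$ depending only on $M$, $c_L$ and absolute constants, in particular independent of $N$. For $\EE[P_N]$ with $N = M^L$ one simply truncates the telescope at level $L$, which incurs no additional bias; for $N$ not a power of $M$ a minor adjustment of the final level suffices without changing the scaling. For $\EE[P]$ itself, the number of levels $K$ is chosen so that the residual bias, which by Theorem \ref{OldTheorem} decays like $M^{-K}$, is below $\epsilon$; this requires only $K = O(\log \epsilon^{-1})$ and does not alter the $\epsilon^{-2}$ complexity.

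The main substantive work has already been carried out in Theorem \ref{NewTheorem}; the corollary itself is a clean bookkeeping application of the general multilevel framework. Its only subtle point is that replacing $\Pkminusone$ by $\Pkbar$ preserves the correct expectation, which is exactly where exchangeability enters and is the one step I would take care to spell out.
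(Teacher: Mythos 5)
Your proposal is correct and follows essentially the same route as the paper: it constructs the improved estimator based on the telescoping sum \eqref{telescope2} with $\overline{P}^{(k-1)}$ in place of $P^{(k-1)}$ (the estimator $\overline{Z}_l$ of \eqref{mlestiimpr}), checks unbiasedness via exchangeability, cost via \ref{compl}, the bias condition via Theorem \ref{OldTheorem} (or Theorem \ref{NewerTheorem}), and the variance condition with $\beta=3/2$ via Theorem \ref{NewTheorem}, and then invokes the $\beta>1$ regime of Proposition \ref{ml-theorem}. The handling of the finite-$N$ case by truncating the telescope at a fixed top level likewise matches the paper's argument, so there is nothing to correct.
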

Note that we have managed to remove the logarithmic factor present in Corollary \ref{complex1} and that $c_5$ does not depend on $N$. 

\section{Proof of convergence rates} \label{Convergence1}

We first  prove Theorem \ref{NewerTheorem} which contains statements in the general and smooth case. The rest of this section is devoted to
the proof of Theorem \ref{OldTheorem} dealing with a specific non-smooth payoff relevant to our application.

\begin{lemma} \label{OldLemmaDwa}
Let $P_N$ and $P$ be as in (\ref{trancheLoss0}) and (\ref{trancheLossInfty}), and assume $p$ is Lipschitz with constant $c_p$. Then
\[
\be_{|L}[(P_N-P)^2] 
\leq \frac{c_p^2}{4N}.
\]
\end{lemma}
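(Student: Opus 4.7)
The plan is to reduce the statement to a variance computation for $L_N$ conditional on $\FL$, using the Lipschitz property of $p$ to pass from $|P_N-P|$ to $|L_N-L|$.

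First I would write, by the Lipschitz assumption on $p$,
\[
(P_N-P)^2 = (p(L_N)-p(L))^2 \leq c_p^2 (L_N-L)^2,
\]
pointwise on $\Omega$. Taking conditional expectation given $\FL$ then gives
\[
\be_{|L}[(P_N-P)^2] \leq c_p^2 \, \be_{|L}[(L_N-L)^2].
\]

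Next I would identify the right-hand side as a conditional variance. By (\ref{expectL}), conditionally on $\FL$ the $Y_i$ are i.i.d.\ Bernoulli with parameter $L$, so $\be_{|L}[L_N] = L$ and
\[
\be_{|L}[(L_N-L)^2] = \mathrm{Var}_{|L}[L_N] = \frac{1}{N^2}\sum_{i=1}^N \mathrm{Var}_{|L}[Y_i] = \frac{L(1-L)}{N}.
\]

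Finally, the elementary bound $L(1-L)\leq 1/4$ for $L\in[0,1]$ gives $\be_{|L}[(L_N-L)^2]\leq 1/(4N)$, and combining with the previous display yields the claimed estimate. There is no real obstacle here: the only ingredients are Lipschitz continuity of $p$, the conditional independence in (\ref{expectL}), and the bound $L(1-L)\leq 1/4$.
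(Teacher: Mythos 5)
Your argument is correct and is essentially identical to the paper's proof: the Lipschitz bound $(P_N-P)^2\le c_p^2 (L_N-L)^2$, the identification $\be_{|L}[(L_N-L)^2]=\mathrm{Var}[L_N\mid\FL]=L(1-L)/N$ via conditional independence, and the bound $L(1-L)\le 1/4$. Nothing to add.
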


\begin{proof}
Since the function $p$ in \eqref{trancheLoss0} is assumed Lipschitz and $\be_{|L} [L_N] = L$,
we have
\begin{eqnarray*}
\be_{|L}[(\PN-P)^2]  \leq  c_p ^2 \ \be_{|L}[(\LN-L)^2] 
= c_p^2 \ Var[\LN|\fm]
= \frac{c_p^2}{N} \ Var[\Yi|\fm] 
= \frac{c_p^2}{N} \ L(1-L).
\end{eqnarray*}
For $L \in [0,1]$, $L(1-L) \leq \frac{1}{4}$, which gives the result.
\end{proof}

\begin{proof}[Proof of Theorem~\ref{NewerTheorem}]
Equation (\ref{betapart}) follows directly from Lemma \ref{OldLemmaDwa}, and then, by Cauchy-Schwarz,
\[
\vert \be[P-P_N]  \vert \le \sqrt{ \be [\be_{|L}[(P_N-P)^2]] } \le \frac{c_p}{2 \sqrt{N}}.
\]

For differentiable $p$, we can write
\[
\be [p(L)-p(L_N)] =  \be [p'(L)(L-L_N)] + \be [r(L,L_N)],
\]
with some remainder $r$, where the first term on the left-hand side is
\[
\be [ \be_{|L}[p'(L)(L-L_N)]] = 0.
\]
If $p$ has a Lipschitz derivative,
\[
\vert p(x)-p(y) - p'(x) (x-y) \vert \le \frac{1}{2} C_p (x-y)^2
\]
for all $0\le x,y\le 1$ and the remainder term satisfies 
\[
|\be [r(L,L_N)]| \le \frac{C_p}{8N},
\]
from which (\ref{smoothest}) follows.
\end{proof}

Now, we turn to the proof of  Theorem \ref{OldTheorem} and show a few Lemmas first.
We divide the ranges of $L$ and $L_N$ into the three intervals $I_1=[0,K_1]$, $I_2=(K_2,1]$ and $I_3=(K_1,K_2]$, in each of which the function $p$ from (\ref{trancheLoss0}) is linear;
the point being that the probability of $L$ and $L_N$ lying in different intervals is small for large $N$, and the expected difference of $P-P_N$ is small if they are in the same interval. The following Lemmas quantify this.

\begin{lemma} \label{OldLemmaCztery}
For $j=1,2$, we have
\begin{eqnarray}
\bp_{|L}(L \in I_j, \LN \in I_j^c) & \leq & 1_{L \in I_j} \; e^{-N (L-K_j)^2}, \label{ldtone} \\
\bp_{|L}(L \in I_j^c, \LN \in I_j) & \leq &  1_{L \in I_j^c} \; e^{-N (L-K_j)^2}. \label{ldthtwo} 
\end{eqnarray}
\end{lemma}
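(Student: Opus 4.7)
The plan is to reduce each of the four inequalities (two lines, two values of $j$) to a one-sided Chernoff/Hoeffding tail bound applied to the conditional distribution given $\mathcal{F}_L$. Under $\bp_{|L}$, the variables $Y_1,\dots,Y_N$ are i.i.d.\ Bernoulli with parameter $L$, and each takes values in $[0,1]$ with conditional mean $L$. Hoeffding's inequality applied conditionally therefore yields, for every $t \ge 0$,
\[
\bp_{|L}(L_N - L \ge t) \le e^{-2Nt^2}, \qquad \bp_{|L}(L - L_N \ge t) \le e^{-2Nt^2},
\]
and in particular both right-hand sides are bounded by $e^{-Nt^2}$, which is all the lemma needs.

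Next I would dispatch the four cases, each by choosing $t = |L - K_j|$ and matching the appropriate one-sided tail. For \eqref{ldtone} with $j=1$: on the set $\{L \in I_1\} = \{L \le K_1\}$ one has $K_1 - L \ge 0$, and the event $\{L_N \in I_1^c\} = \{L_N > K_1\}$ is contained in $\{L_N - L \ge K_1 - L\}$, so the conditional probability is at most $e^{-2N(K_1-L)^2} \le e^{-N(L - K_1)^2}$. Outside $\{L \in I_1\}$ the intersection is empty, which supplies the factor $1_{L \in I_1}$. For $j=2$ the roles of the two sides switch: on $\{L > K_2\}$, $\{L_N \le K_2\} \subset \{L - L_N \ge L - K_2\}$, and the second Hoeffding tail gives the bound $e^{-2N(L-K_2)^2}$, again times $1_{L \in I_2}$.

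The inequalities in \eqref{ldthtwo} are handled by the symmetric pairing. For $j=1$: on $\{L \in I_1^c\} = \{L > K_1\}$, the event $\{L_N \le K_1\}$ sits inside $\{L - L_N \ge L - K_1\}$, giving $e^{-2N(L-K_1)^2}$; for $j=2$: on $\{L \le K_2\}$, $\{L_N > K_2\} \subset \{L_N - L \ge K_2 - L\}$, giving $e^{-2N(K_2-L)^2}$. In both cases the indicator $1_{L \in I_j^c}$ arises by emptiness of the joint event on the complementary set.

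There is no real obstacle here: the only content of the proof is Hoeffding conditional on $\mathcal{F}_L$, and the only bookkeeping is identifying, for each of the four cases, which side of the tail is being exploited and that the sign of $L - K_j$ is fixed on the event appearing in the indicator. If one prefers to avoid citing Hoeffding, the same one-sided bounds follow from the exponential Markov inequality applied to $\bp_{|L}(e^{\lambda(L_N - L)}) \le e^{\lambda^2/(8N)}$ and optimising $\lambda$, at no extra cost.
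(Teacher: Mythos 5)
Your proof is correct, and all four case reductions (the containments in one-sided tail events and the sign of $L-K_j$ on the relevant indicator set, with the paper's convention $I_1=[0,K_1]$, $I_2=(K_2,1]$) are handled properly. The route differs from the paper's in the key inequality invoked: the paper applies the Chernoff/Cram\'er bound of Dembo--Zeitouni (Theorem 2.2.3) conditionally, with the Bernoulli relative-entropy rate function $g(L,K_j)=K_j\log(K_j/L)+(1-K_j)\log\bigl((1-K_j)/(1-L)\bigr)$, and then verifies the Pinsker-type inequality $g(L,K_j)\ge (K_j-L)^2$ to reach $e^{-N(L-K_j)^2}$, treating the degenerate endpoints $L=0$ and $L=1$ separately since $g$ is only defined for $L\in(0,1)$. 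You instead apply Hoeffding's inequality conditionally, which gives the stronger exponent $e^{-2Nt^2}$ and is then weakened to $e^{-Nt^2}$; this avoids both the rate-function computation and the separate treatment of the endpoint cases, at the cost of nothing (Hoeffding holds for any $[0,1]$-valued conditionally i.i.d.\ variables, degenerate or not). Both arguments are of the same exponential-moment family, but yours is more self-contained and marginally sharper; the paper's has the advantage of connecting to the exact large-deviations rate, which is however never exploited later. One cosmetic slip: in your final remark the moment bound should be written as a conditional expectation, $\be_{|L}\bigl[e^{\lambda(L_N-L)}\bigr]\le e^{\lambda^2/(8N)}$, not a conditional probability.
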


\begin{proof}
This is a standard large deviations result.
By Theorem 2.2.3 in \cite{DemboZeitouni}, p.~27, and Remark (c) thereafter, for $\fm$-independent and identically distributed random 
variables $(\Yi)_{1\leq i \leq N}$ with $\be[\Yi \mid \fm]=L$, we obtain that if $0<L\leq K_j$,
\[
\bp_{|L}(L_N > \ka) \leq e^{-N g(L,K_j)},
\]
and if $\ka < L <1$,
\[
\bp_{|L}(L_N \leq \ka) \leq e^{-N g(L,K_j)},
\]
where the rate function $g(L,\ka)$ is given on p.~35 in \cite{DemboZeitouni} as
\[
g(L,\ka)=\ka \; \log\left( \frac{\ka}{L}\right)+(1-\ka)  \; \log\left( \frac{1-\ka}{1-L}\right),
\]
since $\Yi$  are Bernoulli distributed  random variables with $P(\Yi=1 \mid \fm)=L$.
It is straightforward to check that for all $L \in (0,1)$
\begin{equation} \label{sx}
g(L,\ka) \geq (\ka-L)^2.
\end{equation}
Hence, by \eqref{sx}, for $0< L \leq K_j$ 
\begin{equation} \label{upper}
\bp_{|L}(L_N > \ka) \leq e^{-N g(L,\ka)} \leq e^{-N ( \ka-L ) ^2},
\end{equation}
and similarly for $K_j < L < 1$.
These estimates are clearly true for the degenerate cases $L=0$ and $L=1$.
From this the result follows.

\end{proof}

\begin{lemma} \label{OldLemmaJeden}
Let $p$ be as in (\ref{payoff}). If
$\AN$ is the event that $L_N$ and $L$ are in the same interval and $\ANc$ its complement, then
\begin{eqnarray}
\be[(\PN-P)\IAN] = - \be[(\LN-L)\IANc 1_{\{L\in I_3\}}].
\end{eqnarray}
\end{lemma}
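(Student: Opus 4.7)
The plan is to identify on which sub-event of $\AN$ the integrand $\PN-P$ is actually nonzero and then exploit the conditional unbiasedness $\be_{|L}[\LN]=L$ to convert an integral over that sub-event into an integral over its complement (restricted to $\{L\in I_3\}$).

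First I would decompose $\AN$ according to which of the intervals $I_1,I_2,I_3$ contains both $L$ and $\LN$. On $\AN\cap\{L\in I_1\}$ the function $p$ is identically $0$, and on $\AN\cap\{L\in I_2\}$ it is identically $K_2-K_1$; hence $\PN-P=0$ on these two pieces. On $\AN\cap\{L\in I_3\}$, we have $p(L)=L-K_1$ and $p(\LN)=\LN-K_1$, so $\PN-P=\LN-L$. Combining these three cases gives
\[
\be[(\PN-P)\IAN]=\be[(\LN-L)\,\IAN\,1_{\{L\in I_3\}}].
\]

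Next I would observe that, by \eqref{expectL} and the definition of $\LN$, $\be_{|L}[\LN-L]=0$, and since $1_{\{L\in I_3\}}$ is $\FL$-measurable,
\[
\be[(\LN-L)\,1_{\{L\in I_3\}}]=\be\bigl[1_{\{L\in I_3\}}\,\be_{|L}[\LN-L]\bigr]=0.
\]
Splitting the indicator $1_{\{L\in I_3\}}=\IAN\,1_{\{L\in I_3\}}+\IANc\,1_{\{L\in I_3\}}$ therefore yields
\[
\be[(\LN-L)\,\IAN\,1_{\{L\in I_3\}}]=-\be[(\LN-L)\,\IANc\,1_{\{L\in I_3\}}],
\]
which when combined with the previous display is exactly the claim.

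There is no substantive obstacle here beyond bookkeeping: the entire content of the lemma is the trivial-but-useful observation that $p$ is linear on each of the three intervals, together with the martingale-type identity $\be_{|L}[\LN-L]=0$. The only care needed is in checking that on $\{L\in I_3\}$ the events $\AN$ and $\{\LN\in I_3\}$ agree, so that the final rewriting in terms of $\IANc$ is legitimate; this is immediate from the definition of $\AN$.
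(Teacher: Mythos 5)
Your proposal is correct and follows essentially the same route as the paper: split $\AN$ over the three intervals, use linearity of $p$ on each piece to reduce to $\be[(\LN-L)\IAN 1_{\{L\in I_3\}}]$, and then use $\be_{|L}[\LN-L]=0$ with $\IAN=1-\IANc$ to flip to the complement. (Minor aside: the check that $\LN\in I_3$ on $\AN\cap\{L\in I_3\}$ is what justifies the middle-interval step $\PN-P=\LN-L$, not the final rewriting, but this does not affect correctness.)
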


\begin{proof}
By splitting the range of $L$ into the different intervals,
\begin{eqnarray*}
\be[(\PN-P)\IAN] &=& \sum_{j=1}^3 \be[(\PN-P)\IAN 1_{\{L\in I_j\}}] \\
&=& \be[(\LN-L)\IAN 1_{\{L\in I_3\}}]  \\
&=& - \be[(\LN-L)\IANc 1_{\{L\in I_3\}}],
\end{eqnarray*}
where we have used in the second line that $\PN=P$ if both $\LN$ and $L$ lie in either $I_1$ or $I_2$ and that $\PN-P=\LN-L$ in $I_3$;
in the last line that $\be_{|L}[\LN-L] = 0$ and $\IAN + \IANc =1$.
\end{proof}

\begin{lemma} \label{OldLemmaPiecA}
Let $A_N^c$ be as in Lemma \ref{OldLemmaJeden}. 
If the CDF $F_L$ of $L$ is Lipschitz at $K_j$, $j=1,2$, with constant $c_L$,
then 
\begin{eqnarray}
\be \left[ \left( \bp_{|L}[{\ANc}] \right)^{\frac{1}{2}} \right] \leq \frac{c_L \; 4 \sqrt{\pi}}{\sqrt{N}}.
\end{eqnarray}
\end{lemma}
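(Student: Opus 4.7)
The plan is to bound $\bp_{|L}[\ANc]$ pointwise in $L$ using Lemma \ref{OldLemmaCztery}, and then to integrate out $L$ using the Lipschitz hypothesis on $F_L$ together with a one-dimensional Gaussian moment.

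First I would decompose $\ANc$ as the disjoint union $\bigcup_{i\neq j}\{L\in I_i,\,L_N\in I_j\}$ over $i,j\in\{1,2,3\}$. For each atom, $L$ and $L_N$ lie on opposite sides of one of the thresholds $K_1$ or $K_2$, so either \eqref{ldtone} or \eqref{ldthtwo} of Lemma \ref{OldLemmaCztery} applies; summing the at most two exponential contributions that can be triggered for a given value of $L$ yields the clean pointwise bound
\[
\bp_{|L}[\ANc] \;\leq\; e^{-N(L-K_1)^2} + e^{-N(L-K_2)^2}.
\]
By subadditivity of the square root this gives
\[
\sqrt{\bp_{|L}[\ANc]} \;\leq\; e^{-N(L-K_1)^2/2} + e^{-N(L-K_2)^2/2},
\]
so, after taking $\be$, the proof reduces to bounding $\be\bigl[e^{-N(L-K_j)^2/2}\bigr]$ for each $j=1,2$.

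For this I would use the layer-cake representation (obtained by writing $\be[X]=\int_0^1 \bp(X>u)\,du$ and substituting $u=e^{-Nr^2/2}$)
\[
\be\bigl[e^{-N(L-K_j)^2/2}\bigr] \;=\; \int_0^\infty \bp(|L-K_j|<r)\,N r\,e^{-Nr^2/2}\,dr,
\]
together with the uniform bound $\bp(|L-K_j|<r)\leq 2c_L r$ for all $r>0$. The latter is immediate when $[K_j-r,K_j+r]\subset[0,1]$ by splitting at $K_j$ and applying the two Lipschitz inequalities $F_L(K_j+r)-F_L(K_j)\leq c_L r$ and $F_L(K_j)-F_L(K_j-r)\leq c_L r$; the boundary cases $K_j+r>1$ or $K_j-r<0$ are handled using $F_L(0)=0$ and $F_L(1)=1$ together with the Lipschitz hypothesis evaluated at $l=0,1$, which in particular forces $c_L\geq 1$ (sum $F_L(K_j)\leq c_L K_j$ and $1-F_L(K_j)\leq c_L(1-K_j)$). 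This last observation also dispatches the degenerate regime in which $[K_j-r,K_j+r]\supset[0,1]$: there $r\geq 1/2$, so $2c_L r\geq c_L\geq 1\geq \bp(|L-K_j|<r)$. Substituting into the integral and using the standard Gaussian moment $\int_0^\infty r^2 e^{-Nr^2/2}\,dr = \sqrt{2\pi}/(2N^{3/2})$ yields
\[
\be\bigl[e^{-N(L-K_j)^2/2}\bigr] \;\leq\; 2c_L N\cdot\frac{\sqrt{2\pi}}{2 N^{3/2}} \;=\; \frac{c_L\sqrt{2\pi}}{\sqrt{N}},
\]
and summing over $j=1,2$ gives $\be[\sqrt{\bp_{|L}[\ANc]}] \leq 2c_L\sqrt{2\pi}/\sqrt{N}\leq 4c_L\sqrt{\pi}/\sqrt{N}$, since $2\sqrt{2}\leq 4$.

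The only nontrivial ingredient is the uniform bound $\bp(|L-K_j|<r)\leq 2c_L r$; everything else is a routine Gaussian integration. I do not foresee a serious obstacle, just a careful accounting of the boundary behaviour of $F_L$, which comes out automatically from the implicit constraint $c_L\geq 1$ built into the hypothesis.
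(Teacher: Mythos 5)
Your argument is correct (modulo the caveat below, which the paper's own proof shares) and it takes a genuinely different route at the key step. Both you and the paper reduce the lemma, via Lemma \ref{OldLemmaCztery} and subadditivity of the square root, to bounding $\be[e^{-N(L-K_j)^2/2}]$ for $j=1,2$; note that your pointwise bound $\bp_{|L}[\ANc]\le e^{-N(L-K_1)^2}+e^{-N(L-K_2)^2}$ (obtained by conditioning on which interval $L$ lies in) is actually sharper than the paper's union bound over four events, which carries an extra factor $2$ and hence a $\sqrt{2}$ after taking square roots -- this is why you land on $2\sqrt{2\pi}\,c_L/\sqrt{N}$ with room to spare against the stated $4\sqrt{\pi}\,c_L/\sqrt{N}$. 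The genuine difference is in how the Gaussian expectation is estimated: the paper writes $\be[e^{-N(L-K_j)^2/2}]$ as a Stieltjes integral against $dF_L$, integrates by parts, and applies the Lipschitz bound to $F_L(l)-F_L(K_j)$ pointwise before an exact Gaussian integration; you instead use the layer-cake identity to rewrite it as $\int_0^\infty \bp(|L-K_j|<r)\,N r e^{-Nr^2/2}\,dr$ and insert the small-ball bound $\bp(|L-K_j|<r)\le 2c_L r$. Both give exactly $c_L\sqrt{2\pi}/\sqrt{N}$ per threshold; your version avoids the integration by parts and keeps the probabilistic content of the Lipschitz hypothesis more visible.

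One caveat: your uniform small-ball bound in the regime $K_j-r<0$, and the auxiliary claim $c_L\ge 1$, rely on $F_L(0)=0$, which is not implied by the hypothesis as stated. Lipschitz continuity of $F_L$ at $K_1,K_2$ over $l\in[0,1]$ is compatible with an atom of $L$ at $0$ (e.g.\ $L=0$ a.s.\ satisfies it with arbitrarily small $c_L$), in which case both $\bp(|L-K_j|<r)\le 2c_L r$ for $r>K_j$ and $c_L\ge1$ fail, and indeed $\be[e^{-N(L-K_j)^2/2}]=e^{-NK_j^2/2}$ can exceed $c_L\sqrt{2\pi}/\sqrt{N}$ for moderate $N$; the lemma itself survives because an atom of $L$ at $0$ (or $1$) contributes nothing to $\ANc$, since then $L_N$ sits deterministically in the same interval. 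The paper's proof makes an equivalent implicit assumption when it extends $F_L$ by $0$ below $0$ and applies the Lipschitz bound across that extension, so this is not a gap relative to the paper -- but you should state explicitly that you assume $\bp(L=0)=0$ (equivalently, that the Lipschitz bound holds for the extended CDF on $\mathbb{R}$).
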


\begin{proof}
Let $I_1^{c}=(K_1,1]$ and $I_2^{c}=[0,K_2]$ be the complements in $[0,1]$ of  $I_1$ and $I_2$, then
\[
\ANc \subseteq \{L \in I_1,  \LN \in I_1^c \} \cup \{L \in I_2,  \LN \in I_2^c \} \cup
\{L \in I_1^c,  \LN \in I_1 \} \cup \{L \in I_2^c,  \LN \in I_1^c \}
\]
and therefore
\begin{eqnarray}
\bp_{|L}[{\ANc}] & \leq & \bp_{|L}[ {L \in I_1},   \; {\LN \in I_1^c}]
+ \bp_{|L}[ {L \in I_2},   \; {L_N \in I_2^c}]
+ \bp_{|L}[ {L \in I_1^c}, \; {L_N \in I_1}] \nonumber \\
&& + \; \bp_{|L}[ {L \in I_2^c}, \; {L_N \in I_2}]. \nonumber
\end{eqnarray}
By \eqref{ldtone}, \eqref{ldthtwo} we have 
\begin{eqnarray*}
\bp_{|L} [{\ANc}] \leq 2 \; \left (e^{-N (L-K_1)^2}+ e^{-N (L-K_2)^2} \right), 
\end{eqnarray*}
and 
we obtain
\begin{eqnarray}
\be \left[ \left( \bp_{|L}[{\ANc}] \right)^{\frac{1}{2}} \right] 
& \leq & 2^{\frac{1}{2}} \; \left (\be  \left[ e^{-N \frac{(L-K_1)^2}{2}} \right] + \be \left[ e^{-N \frac{(L-K_2)^2}{2}} \right]  \right). 
\label{beforeint}
\end{eqnarray}
If we extended $F_L$ by 0 and 1 from $[0,1]$ to $\mathbb{R}$ then, for $j=1,2$, we have
\begin{eqnarray}
\be \left[ e^{-N \frac{(L-K_j)^2}{2}} \right] &=& 
\int_{-\infty}^{\infty} e^{-N \frac{(l-K_j)^2}{2}} \;  dF_L(l) \\
&=& N\; \int_{-\infty}^\infty (l-K_j) e^{-N \frac{(l-K_j)^2}{2}} F_L(l) \; dl \label{lipuse} \\
&\leq& N F_L(K_j) \int_{-\infty}^{\infty} (l-K_j) e^{-N \frac{(l-K_j)^2}{2}} \; dl +
c_L \; N \int_{-\infty}^{\infty} (l-K_j)^2 e^{-N \frac{(l-K_j)^2}{2}} \; dl \nonumber \\
& = &  \frac{c_L \sqrt{2\pi}}{\sqrt{N}}, \nonumber
\end{eqnarray} 
where we used the Lipschitz property of the CDF after (\ref{lipuse}) and then integrated exactly.
The result follows directly by insertion in (\ref{beforeint}).
\end{proof}

\begin{proof}[Proof of Theorem~\ref{OldTheorem}]
By the tower property of conditional expectations and Jensen's inequality, we have
\begin{eqnarray}
\vert \be [ (\PN-P)  \; 1_{\ANc}] \vert \leq  \be [ \vert \be_{|L} [ (\PN-P)  \; 1_{\ANc}] \vert]. 
\end{eqnarray} 
Then Cauchy-Schwarz 
gives
\begin{eqnarray} \label{above}
\vert \be [ (\PN-P)  \; 1_{\ANc}] \vert \leq \be_{|L} \left[ \left(\be [(\PN-P)^2]\right)^{\frac{1}{2}}
\; \left(\bp_{|L}[{\ANc}] \right)^{\frac{1}{2}} \right].
\end{eqnarray}
By Lemmas \ref{OldLemmaDwa} and \ref{OldLemmaPiecA}, we obtain 
\begin{eqnarray}
\vert \be[(\PN-P) \; 1_{\ANc}] \vert \leq \frac{c_L 2\sqrt{\pi}}{N}.
\end{eqnarray}

Similarly, using Lemma \ref{OldLemmaJeden} and the same argument as above,
\[
\vert \be[(\PN-P) \; 1_{\AN}] \vert \leq \frac{c_L 2 \sqrt{\pi}}{N},
\]
from which the statement follows.

%
%
\end{proof}

\section{An application and numerical results} \label{Numerical}

To illustrate the theoretical rate of convergence,
we study numerical results for expected tranche losses of a synthetic CDO for an increasing size $N$ of the underlying CDS pool.

We consider a structural factor model (see, e.g., \cite{lipton02,MyFirstPaper}), where the \emph{distance-to-default} of the $i$-th firm,
$i=1,\ldots,N$, evolves according to
\begin{eqnarray}
\label{Xti}
X_t^i &=& X_0^i+\beta t+\sqrt{1-\rho} \; W_t^i+ \sqrt{\rho} \; B_t + J_t , \quad t>0,
\end{eqnarray}
where $\rho \in [0,1)$, $\beta$ given. 
Here, $B$ is assumed to be a standard Brownian motion and $J_t = \sum_{k=1}^{{C\!P}_t} \Pi_k$, where ${C\!P}_t$ is a Poisson process with intensity $\lambda$ and $\Pi_k$ are independent Normals with mean $\mu_{\Pi}$
and variance $\sigma_\Pi^2$, while all $W^i$ are independent standard Brownian motions and independent of $B$ and $J$. Thus $B$ and $J$ model factors affecting the whole market, whereas $W^i$ are idiosyncratic effects.

The $i$-th firm is considered to be in default if its distance-to-default 
is below 0 at any one of the observation times $T_j = j q$, $q=0.25$ (quarterly), up to $T_{20}=T=5$, the assumed maturity of the debt here.
We introduce the default time $\tau_i$ and Bernoulli random variable $\Yi$ indicating default of the $i$-th firm before $T$, by
\begin{eqnarray}
\nonumber
\tau^i &=& \inf \left( \{t \in \{T_1,\ldots, T_M\}: X_t^i \le 0 \} \cup \{\infty\} \right), \\
\Yi &=& 1_{\{\tau^i\le T\}}.
\label{Yi}
\end{eqnarray}


For the numerical experiments, the initial values $X_0^i$ are 
drawn independently from a Normal distribution, 
\begin{eqnarray*}
X_0^i &\sim& N(\mu_{X_0},\sigma^2_{X_0}),
\end{eqnarray*}
where the mean $\mu_{X_0}=4.6$ and standard deviation $\sigma_{X_0}=0.8$ are obtained from a calibration to iTraxx data as detailed in \cite{MyFirstPaper},
as
are $\rho=0.13$, 
$\lambda=0.04$, $\mu_{\Pi} = -0.5$ and $\sigma_\Pi^2=0.17$. 

That the definition of $Y_i$ in (\ref{Yi}) fits into the initial set-up is a consequence of the exchangeability of $X_t^i$ in (\ref{Xti}).
If we define 
\begin{eqnarray*}
\overline{X}_t^i &\equiv& \left\{
\begin{array}{rl}
X_t^i, & \quad t < \tau^i, \\
0, & \quad t\ge \tau^i,
\end{array}
\right.
\end{eqnarray*}
then the $\overline{X}_T^i$ are still exchangeable.
Hence, by de Finetti's Theorem (see \cite{Kallenberg}),
there exists a random measure $\alpha$ on $\mathbb{R}$ such that a.s.
\[
\alpha(B) = \lim_{N\rightarrow\infty} \frac{1}{N} \sum_{i=1}^N 1_{\overline{X}_T^i \in B}
\]
for all Borel sets $B$.
Conditional on $\alpha$, the $\overline{X}_T^i$ and $Y_i$ are i.i.d.
The link to the random variable $L$ is established by defining 
\[
L \equiv \alpha(\{0\}) = \lim_{N\rightarrow\infty}  \frac{1}{N} \sum_{i=1}^N 1_{\{\overline{X}_T^i=0\}} = \lim_{N\rightarrow\infty}  \frac{1}{N}\sum_{i=1}^N Y_i
= \lim_{N\rightarrow\infty} L_N.
\]
Clearly, $Y_i$ takes values in $\{0,1\}$ and $\bp[Y_i=1|\FL] = \be_{|L}[Y_i] = \be_{|L}[L_N] = L$.

It is shown in \cite{MyFirstPaper} that the above random measure $\alpha$ is the sum of $L$ times a Dirac measure located at 0 and a continuous part which 
satisfies a stochastic partial differential equation. 
To generated (approximate) samples of $L$,  we numerically solve the SPDE by a combined Monte Carlo finite difference method (see again \cite{MyFirstPaper})
to generate samples of the random measure, and use this to compute $L$.
So, on this instance, there is an alternative -- albeit very costly -- way of simulating $L$ directly, and we use this to investigate the relevant properties of $L$ empirically.

Specifically, in view of the conditions of Theorem \ref{OldTheorem}, we illustrate the numerically computed CDF $F_L$ of $L$ for different parameters in Figure \ref{fig_EmpiricalCDF_Final}.
\begin{figure}
\centering
\scalebox{0.65}
{\hspace{-3.5 cm} \includegraphics{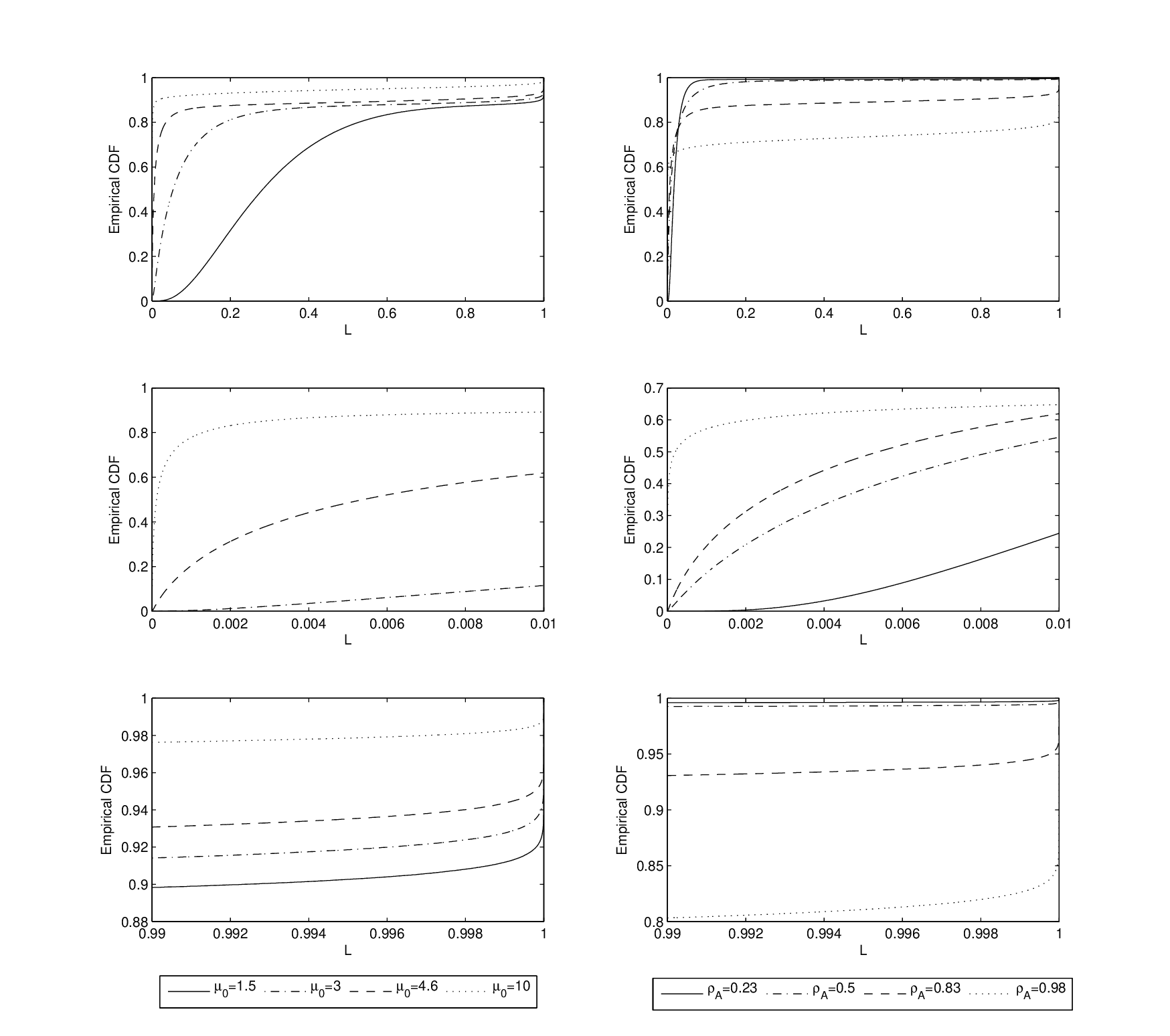} \vspace{0.6 cm}}
\caption{Top row: Empirical CDF $F_L$ for different values of $\mu_0= \mu_{X_0}$ (left) and different $\rho_A$ (right).
The values of $\rho_A$ are arrived at by (\ref{rhoA}) from $(\rho,\lambda) \in \{
(0.03, 0.001),
(0.1, 0.002),
(0.35, 0.0035),
(0.35, 0.0351),
(0.8 , 0.1)
\}$.
All other parameters are fixed as given in the text.
The plots in the second and third rows are zoomed into the ranges of $L$ close to 0 and 1, respectively.}
\label{fig_EmpiricalCDF_Final}
\end{figure}
It appears that $F_L$ is Lipschitz in $(0,1)$  but that the derivative at 0 and 1 can become very large in certain parameter ranges for $\mu_0 = \mu_{X_0}$ and overall instantaneous correlation
\begin{eqnarray}
\label{rhoA}
\rho_A=(\rho+\zeta)/(1+\zeta), \quad \zeta=\lambda(\mu_\Pi^2 + \sigma_\Pi^2),
\end{eqnarray}
between $X_t^i$ and $X_t^j$ (see \cite{MyFirstPaper}).

For large values of $\mu_0$, the probability of defaults becomes very small and the density of $L$ is concentrated around 0.
For $\rho_A$ approaching 1, all $Y_i$ become identical and therefore either all or none of the firms default, such that here the density of $L$ is concentrated at 0 and 1.
In the degenerate case $\rho_A=0$ (i.e., $\rho=\lambda=0$), $L$ is deterministic, the measure is atomic and $F_L$ a step function.

The empirical evidence thus suggests that $F_L$ is Lipschitz in the range $(0,1)$. Given that Theorem \ref{OldTheorem}
only requires the Lipschitz property at interior values $K_j$, the conditions appear to be satisfied and the Theorem to apply in this setting.
Even in situations where $F_L$ has a bounded derivative at 0 and 1, the fact that only the Lipschitz constants from $K_1$ and $K_2$ enter into the estimates gives us substantially smaller bounds.

We now move on to present numerical results for the payoff function $p$ from (\ref{payoff}) illustrating the convergence as the number of firms $N$ goes to infinity.
We consider portfolios consisting of $N_k=M^k=5^k$ companies for $k=1,\ldots,7$. 

To include a recovery value of defaulted firms in the model, we rescale $L_N$ by $(1-R)$, where $R=0.4$ is the recovery rate.
Equivalently, we pick $(K_1,K_2)=(1-R)^{-1} (a,d)$ in (\ref{trancheLoss0}) and $(a,d) \in \{(0,0.03),(0.03,0.06),(0.06,0.09),(0.09,0.12),(0.12,0.22),(0.22,1)\}$ 
as the attachment and detachment points for iTraxx tranches, and then study $(1-R) p(L_N)$. 

A straightforward Monte Carlo estimator for expected tranche losses $\be [\Pk]$ is then given by
\begin{eqnarray}
\widehat{G}_k &=& \frac{1}{n} \sum_{j=1}^n (1-R) p(L^{(k,j)}), \\
L^{(k,j)} &=& \frac{1}{N_k} \sum_{i=1}^{N_k} Y_i^{(j)},
\end{eqnarray}
where $(Y_i^{(j)})$ are independent samples of $Y_i$,
i.e.,
corresponding to independent paths for $B$, $W$ and $J$.
There is no time discretisation error as (\ref{Xti}) can be sampled directly.
However, it turns out to be computationally prohibitively expensive to choose $n$, the number of samples, large enough to produce estimators with sufficiently small RMSE to allow us to distinguish between $\widehat{G}_k$ and
$\widehat{G}_{k+1}$ for large $k$. 

We therefore use the multilevel simulation approach outlined in Section \ref{Setup} and detailed further in Section \ref{Multi}.
The point is that the differences $G_{k+1}-G_k$ are simulated directly in the multilevel approach.
Therefore, 
we approximate $|G-G_k|$, where $G\equiv \lim_{k\rightarrow\infty} G_k$, by
\begin{equation} \label{Zk}
S_k=\vert G_k-G_K \vert = \Bigg\vert \sum_{l=k+1}^K Z_l \Bigg\vert
\end{equation}
for $k<K$, where $Z_l$ is an estimator for $\be[\Pl-\Plminusone]$ as used in the construction of $G_k$ in (\ref{multiest}) (precisely, we used the estimator $Z_l$ defined later in (\ref{mlestisimple})).
The difference between $S_k$ and $|G-G_k|$ for $k=K-1$ is given by $G_{K-1}-G_K \approx (G_{K-1}-G)(1-1/M)$ and for $k=K-2$ by $G_{K-2}-G_K \approx (G_{K-2}-G)(1-1/M^2)$.
Given $M=5$ in our examples, the error due to this approximation will be seen to be smaller than the estimation error.

The results are shown in Figure \ref{fig_ConvergenceX0_resampling_Improved}.
We plot the logarithm of $S_k$ to base $M$, together with the sample standard deviation of the the multilevel estimators $G_k$ (see (\ref{multiest})) and
\begin{equation} \label{yk}
y_k= -k + y_0,
\end{equation}
where $y_0$ is a suitably chosen constant, to verify the predicted convergence order empirically.
\begin{figure}
\centering
\scalebox{0.75}
{\hspace{-2.2 cm} \includegraphics{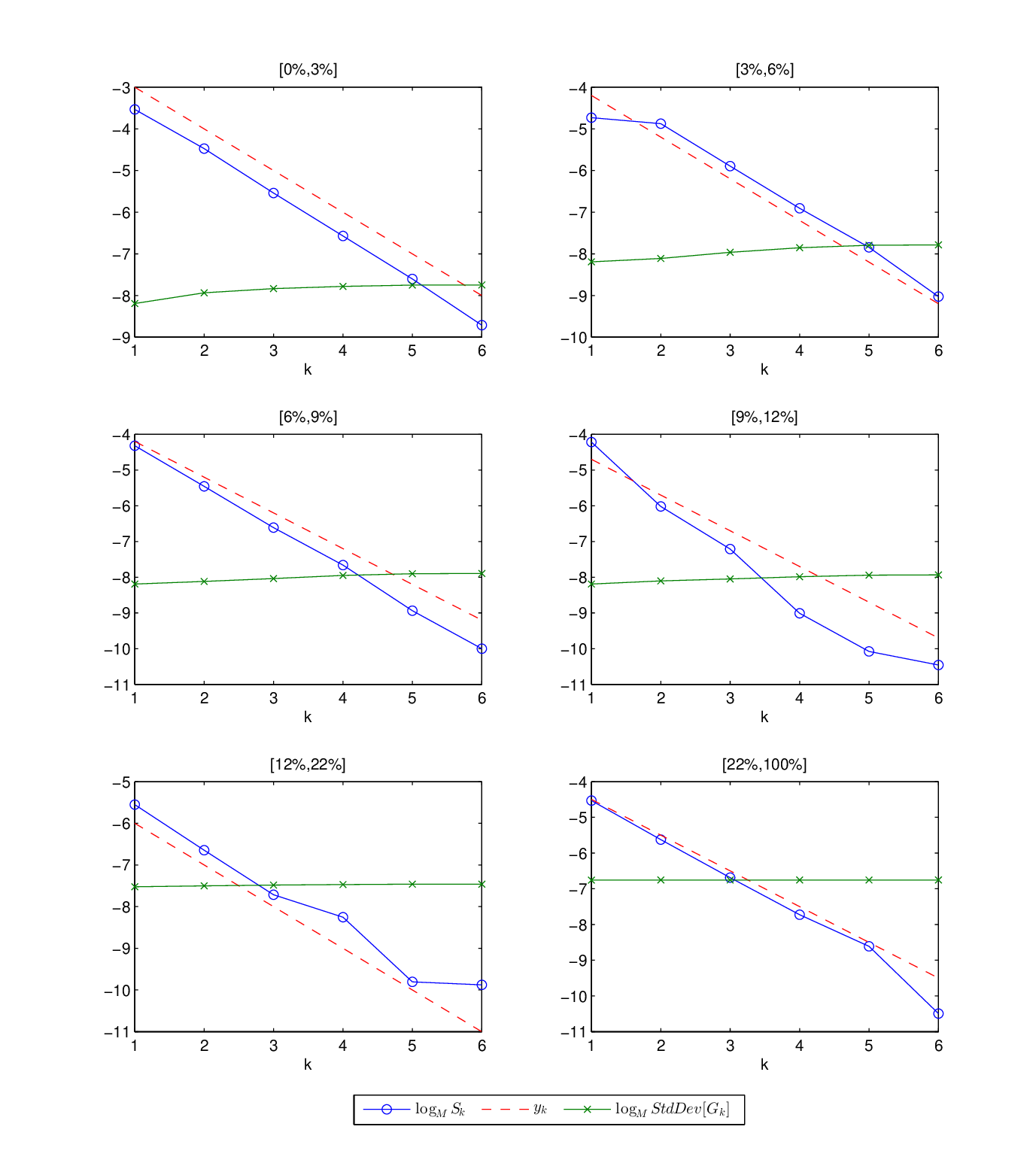}}
\caption{
Shown here is $\log_M S_k$, where $S_k$ given by \eqref{Zk} is an estimator for $|\be[\Pk-P]|$.
The various plots are for tranches ranging from [0\%-3\%] to [22\%-100\%], 
of a CDO basket consisting of $N_k=M^k=5^k$ companies, where $k=1, \ldots, 6$.
The comparison with the predicted trend $y_k$ from \eqref{yk}
confirms the first order convergence.
Included is also the standard deviation of the estimated tranche loss $G_k$. 
}
\label{fig_ConvergenceX0_resampling_Improved}
\end{figure}
The data points appear to be in good agreement with first order convergence.

\section{Analysis of the multilevel method} \label{Multi}

In this section, we describe and analyse a multilevel simulation approach for the estimation of expected functionals 
of the form (\ref{trancheLoss0}) and (\ref{trancheLossInfty}), the latter with a particular emphasis on the case of large $N$. 

\label{subsec:basic}

The multilevel Monte Carlo method proposed by Giles in \cite{MikeMultilevel} estimates the expected value of a functional of the solution to a stochastic differential equation obtained by a timestepping scheme. It performs computations on different refinement levels $l$ with time steps $h_l = h_0 M^{-l}$ for $M>1$, such as to
minimise the overall computational time of the Monte Carlo estimator for prescribed mean square error (MSE).
Since the MSE consists of a Monte Carlo error (variance) and a discretisation error (bias), the method controls both the number of samples $n_l$ on level $l$, 
to bound the Monte Carlo variance of order $O(n_l^{-1})$, and the finest $L$ with time step $h^{-L}$ on which to approximate the SDE, in order to reduce the bias.
The multilevel method is based on two premises: Monte Carlo estimators for an increasing number of time steps converge at a certain order in $h_l$, and the computational cost needed to calculate an estimator increases with $n_l h_l^{-1}$.
In this approach, estimators obtained with a smaller number of time steps are used as control variates for estimators with a larger number of time steps, which significantly decreases the computation time. 


To obtain a complexity result for an estimator of $\be[P]$ with $P$ from (\ref{trancheLoss0}), we 
substitute $h_l$ by $N_l^{-1}$ in Theorem 3.1 of \cite{MikeMultilevel} and immediately obtain Proposition \ref{ml-theorem} from Section \ref{Setup}.

%
%

By direct inspection, for the construction of $Z_l$ from (\ref{mlestisimple}), Assumption \ref{cons} holds in Proposition \ref{ml-theorem}.
From Theorem \ref{NewerTheorem}, we know that \ref{alph} holds with $\alpha=1/2$ for general Lipschitz $p$.
Clearly, the computational effort to compute $Z_l$ is proportional to $n_l N_l$ as required in \ref{compl}.
Finally, \ref{bet} holds by the following simple application of Lemma \ref{OldLemmaDwa}.
\begin{proposition} \label{ThmPl}
Let $\Pl=P_{N_l}$ as per (\ref{trancheLossInfty}), 
where $p$ is Lipschitz with constant $c_p$, then
\begin{eqnarray}
\label{basicvar}
Var[\Pl-\Plminusone] & \leq & c_p^2 \ \frac{M+1}{2N_l}.
\end{eqnarray} 
\end{proposition}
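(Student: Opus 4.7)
The plan is to avoid tracking the precise coupling between the $N_l$- and $N_{l-1}$-sample losses and instead use $P = p(L)$ as a common reference point. Specifically, I would use the elementary inequality $(a-b)^2 \le 2(a-c)^2 + 2(b-c)^2$ with $a=\Pl$, $b=\Plminusone$, $c=P$, to obtain
\[
\text{Var}[\Pl - \Plminusone] \;\le\; \EE\bigl[(\Pl - \Plminusone)^2\bigr] \;\le\; 2\,\EE\bigl[(\Pl - P)^2\bigr] + 2\,\EE\bigl[(\Plminusone - P)^2\bigr].
\]

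The second step is to apply Lemma \ref{OldLemmaDwa} (taking the $\be_{|L}$ bound and then taking outer expectation via the tower property) to each of the two terms on the right-hand side, with $N = N_l$ in the first and $N = N_{l-1}$ in the second. This gives
\[
\text{Var}[\Pl - \Plminusone] \;\le\; \frac{c_p^2}{2 N_l} + \frac{c_p^2}{2 N_{l-1}}.
\]
Finally, substituting $N_{l-1} = N_l/M$ collects the two fractions into $c_p^2(M+1)/(2N_l)$, as required.

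There is no real obstacle. The only point worth checking is that the bound holds irrespective of how $\Pl$ and $\Plminusone$ are jointly constructed (e.g.\ whether $\Plminusone$ uses the first $N_{l-1}$ of the $N_l$ Bernoulli variables, or a disjoint copy), because the triangle-type inequality above never touches the covariance between $\Pl$ and $\Plminusone$ directly, and Lemma \ref{OldLemmaDwa} is a marginal statement about a single loss functional. Thus the bound is valid for any coupling that keeps the conditional-Bernoulli structure \eqref{expectL} intact on each level.
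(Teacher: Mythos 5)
Your proposal is correct and is essentially identical to the paper's own proof: the paper also writes $\Pl-\Plminusone = (\Pl-P)-(\Plminusone-P)$, applies the same inequality $(a-b)^2\le 2(a-c)^2+2(b-c)^2$ conditionally on $L$, invokes Lemma \ref{OldLemmaDwa} for each term, and takes outer expectations before using $N_{l-1}=N_l/M$. No differences worth noting.
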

%
%
%

\begin{proof}

This follows directly from
\begin{eqnarray}
\be_{|L}[(\Pl-\Plminusone)^{2}] &=& \be_{|L}[((\Pl-P)-(\Plminusone-P))^{2}] \nonumber \\
&\leq& 2 \left( \be_{|L}[(\Pl-P)^{2}] + \be_{|L}[(\Plminusone-P)^{2}]   \right)
\end{eqnarray}
by Lemma \ref{OldLemmaDwa} and taking expectations over $L$.
\end{proof}
\noindent
We have therefore proven the first statement of Corollary \ref{complex1}. \\

In practice, it is also relevant to be able to compute $\be[P_N]$ efficiently for finite $N$. It is clear that for fixed $N$, the complexity is bounded by $c \ \epsilon^{-2}$ for some $c>0$,
but for a na\"{\i}ve (single-level) estimator the constant $c$ will increase with $N$, as detailed in Section \ref{Setup}.
From the proof of Theorem 3.1 in \cite{MikeMultilevel} it is clear, however, that there is a multilevel estimator with \emph{a priori} bounded upper level $K$ which satisfies
the second statement in Corollary \ref{complex1}. \\

\label{subsec:improved}

We now discuss the multilevel estimator $\overline{Z}_l$, based on the faster decay rate 3/2  for piecewise linear payoffs
in Theorem \ref{NewTheorem}, which we prove subsequently.

It is clear that $\overline{Z}_l$ satisfies \ref{cons} in Proposition \ref{ml-theorem} and that the computational complexity is still bounded as required per \ref{compl}.
In fact, as the main computational cost is typically in sampling $\Yi$, the computational complexity is virtually identical to that of $Z_l$.
In particular, if we evaluate (\ref{lfine}) by using (\ref{LlMRel}) and the already computed (\ref{lscoarse}), 
the difference in evaluating $Z_l$ and $\overline{Z}_l$ is an $O(M)$ cost, i.e., independent of $N_l$.
Now, given Theorem \ref{NewTheorem}, we have that
\begin{eqnarray}
\label{varbar}
Var[\overline{Z}_l] \le c \, n_l^{-1} M^{-3/2 \, l},
\end{eqnarray}
for some $c$, such that we are in the first regime in the complexity result of Proposition \ref{ml-theorem}, i.e., we have optimal complexity order. \\

We have not commented so far on the (optimal) selection of $M$. The choice of $M=5$ in Section \ref{Numerical} was to some extent dictated by the application of a CDO basket where the target size is $N=125=5^3$, and therefore for $M=5$ this $N$ is reached exactly for level $K=3$.
For different $M$, or indeed for $N$ which is not an integer power of an integer $M$, one can adapt the method easily by choosing
$K$ as the largest integer such that $M^{K-1} < N$, and then estimate the correction between $\be[P^{(K-1)}]$ and $\be[P_N]$
by a last estimator $Z_K$. Such considerations are obviously irrelevant for the estimation of $\be[P]$, and there the choice of $M$ is entirely dictated by complexity issues.

The total error is a combination of the bias, dictated by the number of Bernoulli random variables $N_K$ on the finest level and therefore largely independent of $L$, and the variance of the individual estimators $Z_l$ or $\overline{Z}_l$. The effect of increasing $M$ is that the variance of $Z_l$ may increase, but conversely the number of levels required to reach a given $N$ will decrease and therefore the total number of random variables which need to be simulated may be lower. There is a discussion in \cite{MikeMultilevel} on the optimal selection, with a heuristic calculation for $\beta=1$, suggesting an optimal value of 6 or 7, which is then lowered to 4 in computations to incorporate a sufficient number of levels for a reliable estimation of the variance on course levels. For a faster decay of the variance, $\beta=3/2$, the optimal $M$ can be expected to be smaller, and therefore $M=5$ seems a sensible choice, although we did not test this systematically.

The remainder of this section is devoted to the proof of Theorem \ref{NewTheorem}.

\begin{lemma} \label{thmb2}
Assume the CDF $F_L$ of $L$ is Lipschitz with constant $c_L$.
Let $\Bl$ be the event that $\Ll$ lies in the same interval as $\Llminusone\!$, $\Blc$ its complement, then
\begin{eqnarray}
\be\left[\left( \bp_{|L}[{\Blc}] \right)^{\frac{1}{2}}\right] \leq \frac{C}{\sqrt{N_l}}, \nonumber 
\end{eqnarray}
where $C=c_L \; 4 \sqrt{\pi} (\sqrt{2}+\sqrt{M})$.
\end{lemma}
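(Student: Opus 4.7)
The plan is to reduce the estimate to Lemma \ref{OldLemmaPiecA} by passing through the events defined with respect to the ``true'' loss $L$. Define $\Al$ to be the event that $\Ll$ and $L$ lie in the same one of the intervals $I_1,I_2,I_3$, and $\Alminusone$ the analogous event with $\Llminusone$ in place of $\Ll$; these are just instances of the event $\AN$ of Lemma \ref{OldLemmaJeden} with $N=N_l$ and $N=N_{l-1}$ respectively, so Lemma \ref{OldLemmaPiecA} applies directly to each.

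The key set-theoretic observation is the transitivity of ``lies in the same interval'': if $\Ll$ is in the same interval as $L$ and $\Llminusone$ is in the same interval as $L$, then $\Ll$ and $\Llminusone$ are in the same interval. Hence $\Al \cap \Alminusone \subseteq \Bl$, and taking complements gives
\[
\Blc \;\subseteq\; \Alc \cup \Alminusonec.
\]

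I will then apply the conditional union bound and the subadditivity of the square root to deduce
\[
\bigl(\bp_{|L}[\Blc]\bigr)^{1/2} \;\leq\; \bigl(\bp_{|L}[\Alc]\bigr)^{1/2} + \bigl(\bp_{|L}[\Alminusonec]\bigr)^{1/2}.
\]
Taking expectations and invoking Lemma \ref{OldLemmaPiecA} for each of the two terms (with $N=N_l$ and $N=N_{l-1}$) yields
\[
\be\bigl[(\bp_{|L}[\Blc])^{1/2}\bigr] \;\leq\; \frac{4c_L\sqrt{\pi}}{\sqrt{N_l}} + \frac{4c_L\sqrt{\pi}}{\sqrt{N_{l-1}}} \;=\; \frac{4c_L\sqrt{\pi}\,(1+\sqrt{M})}{\sqrt{N_l}},
\]
using $N_{l-1}=N_l/M$. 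The slack between $1+\sqrt{M}$ and the claimed constant $\sqrt{2}+\sqrt{M}$ is absorbed by the trivial bound $1\le \sqrt{2}$, which would give the stated constant $C = c_L\,4\sqrt{\pi}(\sqrt{2}+\sqrt{M})$.

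There is no real obstacle here; the only content is the inclusion $\Al\cap\Alminusone\subseteq\Bl$, and the rest is a routine application of the previously established Lemma \ref{OldLemmaPiecA} together with elementary inequalities. The $\sqrt{2}$ appearing in the stated constant seems to be chosen for cosmetic symmetry with the exponential bounds behind Lemma \ref{OldLemmaPiecA}, rather than being forced by the argument.
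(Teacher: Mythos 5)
Your proof is correct and takes essentially the same route as the paper: reduce $\Blc$ to the events $\Alc$ and $\Alminusonec$ and apply Lemma \ref{OldLemmaPiecA} twice, using $N_{l-1}=N_l/M$ and subadditivity of the square root. The only difference is that the paper decomposes $\Blc$ into three intersections and bounds $\bp_{|L}[\Blc]\le 2\,\bp_{|L}[\Alc]+\bp_{|L}[\Alminusonec]$, which is where its $\sqrt{2}$ actually comes from (it is not cosmetic), whereas your cleaner inclusion $\Blc\subseteq\Alc\cup\Alminusonec$ yields the slightly sharper constant $c_L\,4\sqrt{\pi}(1+\sqrt{M})$ and hence the stated bound a fortiori.
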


\begin{proof}
Let $\Al$ again be the event that $\Ll$ and $L$ are in the same interval,
$\Alc$ its complement.
Then from
\begin{eqnarray}
\Blc &\subseteq& \left( \Al \cap \Alminusonec \right) \bigcup \left( \Alc \cap \Alminusone \right) \bigcup \left( \Alc \cap \Alminusonec  \right) \nonumber
\end{eqnarray}
follows
\begin{eqnarray}
\bp_{|L}[{\Blc}] &\leq & \bp_{|L}[{\Al} \cap {\Alminusonec}] + \bp_{|L}[{\Alc} \cap {\Alminusone}] + \bp_{|L}[{\Alc} \cap {\Alminusonec}] \nonumber \\
&\leq& 2 \; \bp_{|L}[{\Alc}]  + \bp_{|L}[{\Alminusonec}], \nonumber
\end{eqnarray}
which leads to
\begin{eqnarray}
\be \left[ \left( \bp_{|L}[{\Blc}] \right)^{\frac{1}{2}} \right] \leq \sqrt{2} \; \be \left[ \left( \bp_{|L}[{\Alc}] \right)^{\frac{1}{2}} \right]  + \be \left[ \left(  \bp_{|L}[{\Alminusonec}] \right)^{\frac{1}{2}} \right]. \nonumber
\end{eqnarray}
By Lemma \ref{OldLemmaPiecA}, we obtain the result.
\end{proof}

\begin{lemma} 
\label{thm0} 
\label{nowelemmaone}
\label{nowelemma}
For $P$, $\Pl$, $\Plminusone$ and $\Plbar$ as above, $p$ Lipschitz with constant 1,
\begin{eqnarray}
\be_{|L}[(\Pl-P)^4] & \leq & \frac{3}{16N_l^2}\left(1+\frac{4}{3N_l}\right) \leq \frac{7}{16N_l^2}, \label{fourtha} \\
\be_{|L}[(\Pl-\Plminusone)^4] & \leq & \frac{C}{N_l^2}, \label{fourthb} \\
\be_{|L}[(\Pl-\Plbar)^4] & \leq & \frac{C}{N_l^2}, \label{fourthc}
\end{eqnarray}
where $C = \frac{7}{8} (M^2+6M+1)$.
\end{lemma}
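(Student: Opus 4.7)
The plan is to prove the three bounds in the stated order, using (\ref{fourtha}) as the building block and reducing (\ref{fourthc}) to (\ref{fourthb}) via Jensen's inequality and exchangeability.

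For (\ref{fourtha}), the Lipschitz assumption reduces the task to bounding $\be_{|L}[(\Ll - L)^4]$. Conditional on $L$, the variables $\xi_i := \Yi - L$ are i.i.d.\ with mean zero, $\be_{|L}[\xi_i^2] = L(1-L) =: v \le 1/4$, and (using $L^3 + (1-L)^3 = 1-3v$) $\be_{|L}[\xi_i^4] = v(1-3v) \le v \le 1/4$. Writing $\Ll - L = N_l^{-1}\sum_{i=1}^{N_l}\xi_i$ and applying the standard fourth-moment expansion for an average of i.i.d.\ mean-zero variables,
\[
\be_{|L}[(\Ll-L)^4] = \frac{\be_{|L}[\xi_i^4]}{N_l^3} + \frac{3(N_l-1) v^2}{N_l^3} \le \frac{1}{4N_l^3} + \frac{3}{16 N_l^2} = \frac{3}{16 N_l^2}\Bigl(1+\frac{4}{3 N_l}\Bigr),
\]
and the bound $\le \frac{7}{16 N_l^2}$ follows from $N_l \ge 1$.

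For (\ref{fourthb}), I would decompose $\Ll - \Llminusone = (\Ll - L) - (\Llminusone - L)$ and apply Minkowski's inequality in $L^4(\bp_{|L})$. Bounding each term by (\ref{fourtha}) (with $N_l$ and $N_{l-1} = N_l/M$ respectively),
\[
\be_{|L}[(\Ll-\Llminusone)^4]^{1/4} \le \be_{|L}[(\Ll-L)^4]^{1/4} + \be_{|L}[(\Llminusone-L)^4]^{1/4} \le \Bigl(\frac{7}{16 N_l^2}\Bigr)^{1/4}\bigl(1+\sqrt{M}\bigr),
\]
which gives $\be_{|L}[(\Ll-\Llminusone)^4] \le \frac{7}{16 N_l^2}(1+\sqrt{M})^4$. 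The key algebraic step, which couples the constants in (\ref{fourtha}) and (\ref{fourthb}) precisely, is the identity
\[
2(M^2+6M+1) - (1+\sqrt{M})^4 = (\sqrt{M}-1)^4 \ge 0,
\]
verified by expanding both fourth powers in $\sqrt{M}$ (odd powers cancel). Combined with the Lipschitz bound $(\Pl-\Plminusone)^4 \le (\Ll-\Llminusone)^4$, this yields the claim with $C = \frac{7}{8}(M^2+6M+1)$.

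For (\ref{fourthc}), write $\Pl - \Plbar = M^{-1}\sum_{m=1}^M\bigl(p(\Ll) - p(\Llm)\bigr)$. Lipschitz continuity together with Jensen's inequality applied to $x\mapsto x^4$ yields
\[
(\Pl - \Plbar)^4 \le \frac{1}{M}\sum_{m=1}^M (\Ll - \Llm)^4,
\]
and by the exchangeability of the blocks $\Llm$, $m=1,\ldots,M$, conditional on $L$, each summand has the same conditional expectation as $(\Ll - \Llminusone)^4$, so the bound from (\ref{fourthb}) transfers directly. The main obstacle is recognising the algebraic identity above: Minkowski naturally produces $\frac{7}{16 N_l^2}(1+\sqrt{M})^4$, and only the clean factorisation of the gap as $(\sqrt{M}-1)^4$ delivers the exact constant $\frac{7}{8}(M^2+6M+1)$; once this is spotted the remainder is routine bookkeeping.
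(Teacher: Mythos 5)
Your proof is correct, and its overall architecture matches the paper's: an exact conditional fourth-moment computation for (\ref{fourtha}), a triangle-type combination of the level-$l$ and level-$(l-1)$ bounds for (\ref{fourthb}), and Jensen's inequality plus conditional exchangeability of the $M$ blocks to reduce (\ref{fourthc}) to (\ref{fourthb}). The one genuine difference is the middle step: the paper works pointwise with $\Pl-P$ and $\Plminusone-P$, using $(a-b)^4\le 2(a^4+b^4)+12a^2b^2$ and conditional Cauchy--Schwarz on the cross term, which produces $\frac{7}{8}(M^2+6M+1)N_l^{-2}$ directly; you instead apply Minkowski's inequality in conditional $L^4$ to $\Ll-\Llminusone$, obtaining the marginally sharper bound $\frac{7}{16}(1+\sqrt{M})^4 N_l^{-2}$, and then recover the stated constant through the identity $(1+\sqrt{M})^4+(\sqrt{M}-1)^4=2(M^2+6M+1)$. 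Both routes are elementary and of comparable length; yours buys a slightly better constant (strictly better for $M>1$) at the cost of the extra algebraic identity, while the paper's lands on its constant without it. The supporting details in your write-up --- $\be_{|L}[(\Yi-L)^2]=L(1-L)$, $\be_{|L}[(\Yi-L)^4]=L(1-L)(1-3L(1-L))$, the i.i.d.\ fourth-moment expansion, the validity of conditional Minkowski, and the equality in conditional law of $(\Ll,\Llm)$ across $m$ --- all check out.
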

\begin{proof}
See Appendix \ref{app:moments}.
\end{proof}


\begin{proof}[Proof of Theorem \ref{NewTheorem}]
Let $\El$ be the event that all $\Llm$ lie in the same interval, $1\le m\le M$, and $\Elc$ its complement, then
\begin{eqnarray}
\be[(\Pl-\Plbar)^2] &=& \be[(\Pl-\Plbar)^2 1_{\El}] +\be[(\Pl-\Plbar)^2 1_{\Elc}]. \nonumber 
\end{eqnarray}
By \eqref{LlMRel} and linearity of $p$ 
in each interval, we have
\begin{eqnarray}
\be[(\Pl-\Plbar)^2 1_{\El}] &=& 0. \nonumber 
\end{eqnarray}

By Cauchy-Schwartz, we have
\begin{eqnarray}
\be_{|L}[(\Pl-\Plbar)^2 1_{\Elc}] \leq \left(\be_{|L}[(\Pl-\Plbar)^4] \right)^{\frac{1}{2} }
\left(\bp_{|L}[ {\Elc}] \right)^{\frac{1}{2} }, \nonumber
\end{eqnarray}
hence,
\begin{eqnarray}
\be[(\Pl-\Plbar)^2 1_{\Elc}] \leq \be\left[ \left(\be_{|L} [(\Pl-\Plbar)^4] \right)^{\frac{1}{2} } 
\left(\bp_{|L} [ {\Elc}] \right)^{\frac{1}{2} } \right]. \nonumber
\end{eqnarray}
By Lemma \ref{nowelemma}, we have that
\begin{eqnarray}
\left( \be_{|L}[(\Pl-\Plbar)^4] \right)^{\frac{1}{2}} \leq \frac{\sqrt{c_1}}{N_l}, \label{wyzej}
\end{eqnarray}
where $c_1= \frac{7}{8}(M^2+6M+1)$. 

If we denote by $\Bml$ the event that $\Llm$ and $\Ll$ lie in the same interval, then
\[
\Elc = \bigcup_{m=1}^M \Bmlc
\]
and therefore
\begin{eqnarray*}
\bp_{|L}(\Elc) &\le& \sum_{m=1}^M \bp_{|L}(\Bmlc) \ =\ M \, \bp_{|L}(\Blc).
\end{eqnarray*}
By Lemma \ref{thmb2}, this gives
\begin{eqnarray}
\be \left[ \left( \bp_{|L}[{\Elc}] \right)^{\frac{1}{2}} \right] \leq \frac{c_2}{N_l}, \nonumber
\end{eqnarray} 
where $c_2 = c_L \; 4 \sqrt{M \pi} (\sqrt{2}+\sqrt{M})$.
Together with \eqref{wyzej}, we obtain the result. 
\end{proof}


\section{Multilevel tests} \label{MultiTests}

In this section, we present multilevel simulation results based on the estimators from the previous section and illustrating the theoretical findings from there.
We return to the example from Section \ref{Numerical} and estimate expected tranche losses for credit baskets with an increasing number of firms $N_l=M^l$.

For the estimator $Z_l$ from (\ref{mlestisimple}), an upper bound for the variance -- although not a sharp one -- is analytically known from (\ref{basicvar}) and we could use that to determine the number $n_l$ of samples on level $l$ which is
required to bring the variance contribution under a desired threshold.
For the improved estimator $\overline{Z}_l$ from (\ref{mlestiimpr}), however, the bound in (\ref{varbar}) contains the unknown Lipschitz constant of the CDF of $F_L$ via Theorem \ref{NewTheorem}.
In order to determine the optimal allocation $n_l^*$, we use the following algorithm as per \cite{MikeMultilevel}. 
In contrast to there, the upper level $K$ is fixed here which simplifies the stopping criterion somewhat.

\begin{enumerate}
\item Start with $k=1$.
\item Estimate the variance $V_k$ of a single sample using $n_k=10^4$ realisations.
\item Calculate the optimal number of samples, $n_l^{*}$, for $l=0,1,\ldots,k$, using 
\begin{equation} \label{OptimalNl}
n_l^{*}= \left \lceil{ \gamma^{-2} \sqrt{V_l \; N_l^{-1}} \left( \sum_{j=1}^{k} \sqrt{V_j \; N_j} \right)} \right \rceil,
\end{equation}
where $\gamma^2$ is a chosen upper bound of $Var[G_K]$. 
\item Draw extra samples for each level according to $n_l^{*}$.
\item If $k< K$, set $k=k+1$ and go to 2.
\item If $k=K$, finish.
\end{enumerate}

\begin{rem}
As per \cite{MikeMultilevel}, choosing $n_l^{*}$ by \eqref{OptimalNl}, guarantees that the variance $Var[G_K]$ is bounded by $\gamma^2$, since
\begin{equation*}
\begin{split}
Var[G_K] \;=\; \sum_{l=1}^{K} \left(n_l^{*} \right)^{-1} V_l \leq \sum_{l=1}^{K} \left( \gamma^{-2} \sqrt{V_l N_l^{-1}} \sum_{j=1}^{K} \sqrt{V_j N_j} \right)^{-1} V_l \; =\;  \gamma^{2}.
\end{split}
\end{equation*}
A side effect is that, for $k<K$, the variance is smaller than for $k=K$, since
\begin{equation*}
Var[G_k]=\sum_{l=1}^{k} \left(n_l^{*}\right)^{-1} V_l < \gamma^2 \frac{\sum_{l=1}^{k}\sqrt{V_l N_l}} { \sum_{l=1}^{K} \sqrt{V_l N_l}}.
\end{equation*}
Hence, if we compute estimators $G_k$ for all $k$ as a by-product of $G_K$, the variance is the smallest for $G_1$ and then for $G_k$, $k=2, \ldots, K$, gradually reaches the upper bound $\gamma^{2}$.
This effect can be observed in Figure \ref{fig_Multilevel_Resampling}.D. 
\end{rem}


In Figure \ref{fig_Multilevel_Resampling}
we show results for the same parameter setting as in Section \ref{Numerical} and only for the equity tranche. Results from other tests were very similar and did not show any noteworthy additional effects.
In order to easily see the rate of convergence in \ref{fig_Multilevel_Resampling}.A., we plot the logarithm of $V_l$ to base $M$, together with 
\begin{equation} \label{hk}
f_k= -\beta \; k+ f_0
\end{equation}
for different values of $\beta$.
The estimated slope is $\widehat{\beta} \approx 1$ for the original estimator and
$\widehat{\beta} \approx {3}/{2}$ for the improved estimator, which agrees with the theoretical findings.
The order of convergence of $\vert E[\Pl-\Plminusone] \vert$ is  $\widehat{\alpha} \approx 1$, which also agrees with the previous results. 
As can be observed in Figure \ref{fig_Multilevel_Resampling}.C, he number of samples ranges from
$150$ millions for $k=1$ to $34 000$ for $k=7$.
The improved estimator gives further reductions in computational time: the total number of samples ranges now from $35$ millions for $k=1$ to only $350$ for $k=7$.
The standard deviation of $G_k$ is an increasing function of $k$, and is less than or equal to the chosen upper bound $\gamma = 4 \times 10^{-6}$.




\begin{figure}
\centering
\scalebox{0.6}
{\hspace{-2.5 cm} \includegraphics{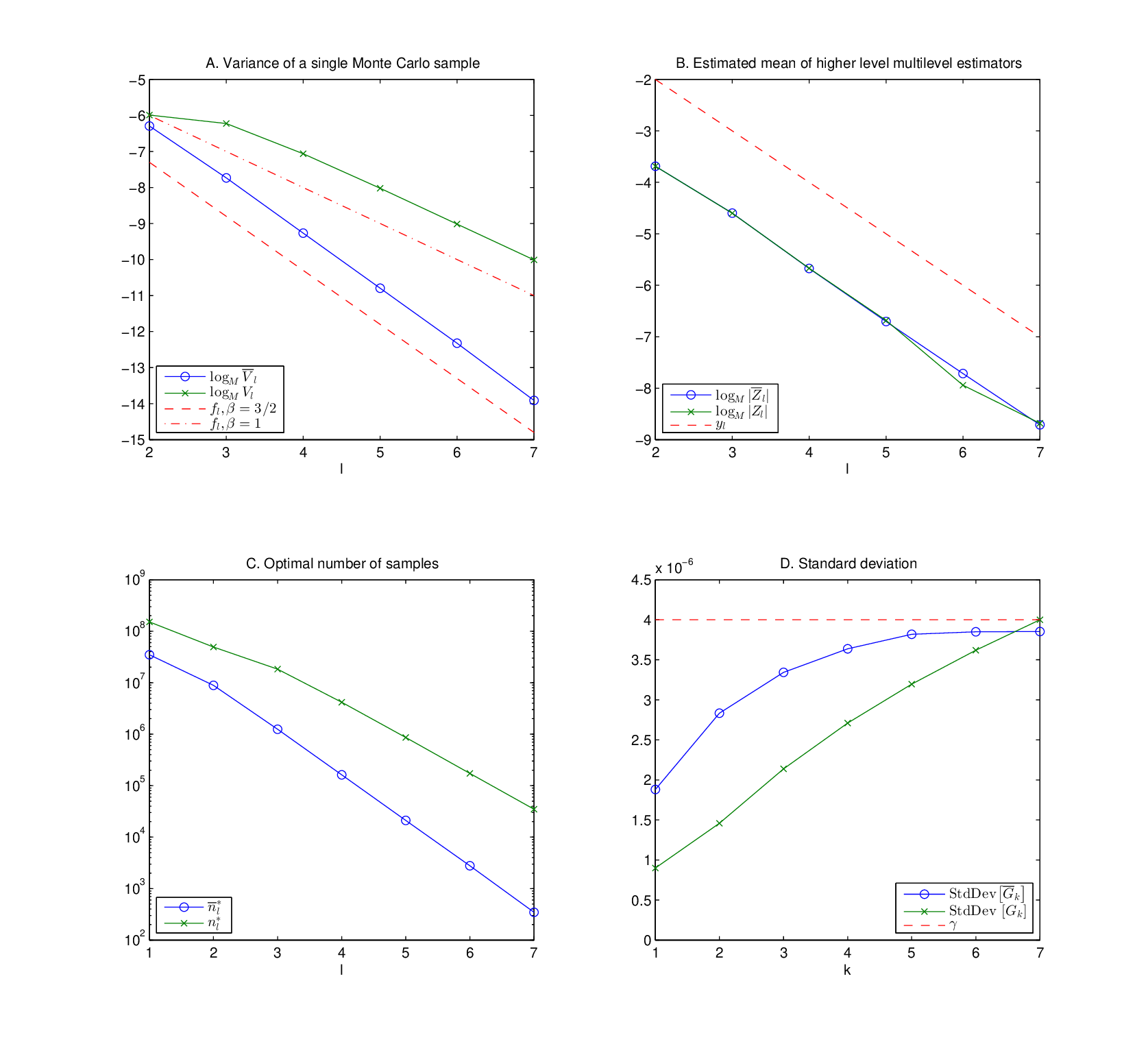}}
\caption{
Multilevel results for the expected loss in the equity tranche of a CDO basket consisting of $N_k$ companies, $N_k=M^k=5^k$, $k=1, \ldots, 7$. 
Overlined quantities refer to the estimator $\overline{Z}_l$ from (\ref{mlestiimpr}), all others to the standard estimator $Z_l$ from (\ref{mlestisimple}).
A.~Variance of a single Monte Carlo sample, $V_l$ and $\overline{V}_l$, together with a predicted trend, $f_l$, given by \eqref{hk}, where $\beta=1$ or $\beta=3/2$.
B.~Mean at level $l$, $Z_l$ and $\overline{Z}_l$, and a trend, $y_l$, defined by \eqref{yk}, with slope -1. C.~Optimal number of simulations in both cases, $n_l^*$ and $\overline{n}_l^*$,
calculated according to  \eqref{OptimalNl} for $k=K=7$. D.~Standard deviation of multilevel estimators $G_k$  defined in \eqref{multiest}, and similar for $\overline{G}_k$, with their chosen upper bound, $\gamma$.}
\label{fig_Multilevel_Resampling}
\end{figure}


\section{Conclusions and extensions} \label{Conclusion}

A main focus of this paper was the construction of an efficient simulation algorithm for functionals of a large number of exchangeable random variables. 
For a specific set-up,
we were able to demonstrate optimal complexity order by theoretical analysis and numerical illustrations.

\subsection*{Discussion}

The results from the previous section show that the computational savings can be significant in situations of 
practical relevance. As seen from Figure \ref{fig_Multilevel_Resampling}.C, already for $N=125$ (i.e., $k=3$), 
the size of a CDO basket, the required number $n_3$ of samples on this level is reduced by about two orders 
of magnitude compared to the number of samples for $k=1$, $n_1$. It is roughly this number which would be 
required for a standard (i.e., single level) estimator on level $3$ for a variance comparable to the one achieved 
by the multilevel estimator at substantially lower cost.

\subsection*{Extensions -- random recovery and random factor loadings}

There is ample empirical evidence that a basic factor model such as the one described in Section \ref{Numerical} 
does not adequately reproduce observed market spreads of credit derivatives and other stylised facts of credit 
markets. Two effects that have so far been neglected are credit contagion (i.e., the default of one firm has an 
impact on the credit worthiness and dependence structure of others) and the dependence of recovery values 
on the wider credit environment.
We focus here on the latter effect and follow \cite{AndersenSidenius} for a model that captures this dependence.

Consider thus the total loss as given by
\[
\widetilde{L}_N = \frac{1}{N} \sum_{i=1}^N l_i \, Y_i,
\]
where $l_i$ represent a random \emph{loss-given-default} for company $i$ and $Y_i$ are default indicators as previously. 
It is sometimes convenient to write
\[
l_i = l_{\max} \cdot \left(1-R_i\right),
\]
where $l_{\max}$ is a (constant) notional maximum loss and $R_i \in [0,1]$ is the (random) recovery rate of the $i$-th 
firm. In keeping with our general framework, we assume that the $R_i$ are identically distributed and independent 
conditional on $\FL$.

For continuous payoffs $p$, we still have
\begin{eqnarray} \label{lossInftyTilde}
\widetilde{L}_N &\rightarrow& \be[\widetilde{L}_N] =: \widetilde{L} \quad \text{for } N \rightarrow \infty, \;\; \bp_{|L}-a.s., \\
\widetilde{P}_N := p(\widetilde{L}_N) &\rightarrow& \be[\widetilde{P}_N] =: \widetilde{P} \quad \text{for } N \rightarrow \infty, \;\; \bp_{|L}-a.s.
\label{payInftyTilde}
\end{eqnarray}


The $L^2$-convergence is described in the following.
\begin{corollary}[to Theorem  \ref{NewerTheorem}]
 \label{NewCorollary}
Let $\widetilde{P}$ and $\widetilde{P}_N$ be given by (\ref{payInftyTilde}),
and assume that $p$ is Lipschitz with constant $c_p$.
We have that
\begin{eqnarray}
\label{alphapart2}
\vert \be[\widetilde{P}_N -\widetilde{P}] \vert & \leq & \frac{c_p \ l_{\max}}{\sqrt{N}},\\ 
Var[\widetilde{P}_N - \widetilde{P}] & \leq &  \frac{c_p^2 \ \l_{max}^2}{N}.
\label{betapart2}
\end{eqnarray}
\end{corollary}
\begin{proof}
In the same way as the proof of Lemma \ref{OldLemmaDwa},
\begin{eqnarray*}
\be_{|L}[(\widetilde{P}_N-\widetilde{P})^2]  \leq  c_p ^2 \ \be_{|L}[(\widetilde{L}_N-\widetilde{L})^2] 
= c_p^2 \ Var[\widetilde{L}_N|\fm]
= \frac{c_p^2}{N} \ Var[l_i \Yi|\fm] 
\le \frac{c_p^2 \ l_{\max}^2 }{N},
\end{eqnarray*}
which gives the result for the variance. The result for the expectation follows again immediately.
\end{proof}

The order $1/2$ for the convergence of the expectations and order $1$ for the variances is sufficient to be able to 
apply Corollary \ref{complex1} to establish the $\epsilon^2 \log^2\epsilon$ complexity for MSE $\epsilon^2$ of 
the multilevel method. The following numerical tests indicate that the order 1/2 is not sharp and indeed we expect 
order 1 for sufficient regularity of the payoffs and/or distribution function of $L$. The proof of this becomes more 
technical than in the pure Bernoulli case because we lose the explicit form of the characteristic function. Thus, and 
because of the irrelevance of this for the convergence speed of the multilevel method, we do not pursue this further 
here.

We now consider a particular model similar to the one in \cite{AndersenSidenius} and give a numerical illustration.
Specifically, let
\begin{eqnarray}
\label{xproc}
X_t^i &=& X_0^i+\beta t+\sqrt{1-\rho} \; W_t^i+ \sqrt{\rho} \; B_t + J_t , \quad t>0, \\
R_t^i &=& \Phi(\mu^R + \beta^R t + \sigma^R \sqrt{1-\rho^R} \; \xi_t^i+  \sigma^R \sqrt{\rho^R} \; B_t + J_t),
\label{rproc}
\end{eqnarray}
where the processes in the first line are defined as in (\ref{Xti}), and in the second line $(\xi^i)$ is a standard 
Brownian motion independent of everything else, while $\mu^R, \beta^R$, $\sigma^R>0$ and $0\le\rho^R\le 1$ 
are constants. $\Phi$ is the cumulative density of the standard normal, but could be replaced by any increasing 
function $f:\mathbb{R} \rightarrow [0,1]$. This has the effect that the recovery rate is positively correlated with 
the market factors $B$ and $J$, with some idiosyncratic noise, and thus there is a negative dependence between 
recovery rates and default frequencies. See, for instance, \cite{AltmanEtAl} for an early but influential study of 
this empirical fact.

The above model is not precisely contained in the set-up of Corollary \ref{NewCorollary}, because the recovery rates processes (\ref{rproc}) are not independent conditional on $L$ (as a result of the different exposure of $R^i$ to $B$ and $J$ compared to $X^i$). However, both the $X^i$ and $R^i$ are independent conditional on $B$ and $J$, and therefore if $\FL$ in the proof of Corollary \ref{NewCorollary} is replaced by $\mathcal{F}_B\times\mathcal{F}_J$, the filtration generated by the common factors (a larger filtration than $\FL$), the result still follows.


\bigskip

In the numerical simulations, we choose the values of $\mu_R$, $\beta_R$ and $\sigma_R$ such that, for all $i=1, \ldots,N$, $R_0^i=0.4$,  $\be[l_i \, Y_i \, \vert \, \tau^i \leq T]  \approx 0.7$ and $Var [ l_i \, Y_i \, \vert \, \tau^i \leq T] \approx 0.16$, compared to Section \ref{MultiTests}, where the recovery rate is constant at $0.4$. In particular, we have $\mu_R=-0.25$, $\beta_R=1.5$ and $\sigma_R=0.9$.
Also, we assume that $\rho_R=\rho$. All other parameters are the same as in the tests in Sections 
\ref{Numerical} and \ref{MultiTests} (see the paragraphs after (\ref{Xti}) for the model set-up and parameter values).

\begin{figure}
\centering
\scalebox{0.6}
{\hspace{-2.5 cm} \includegraphics{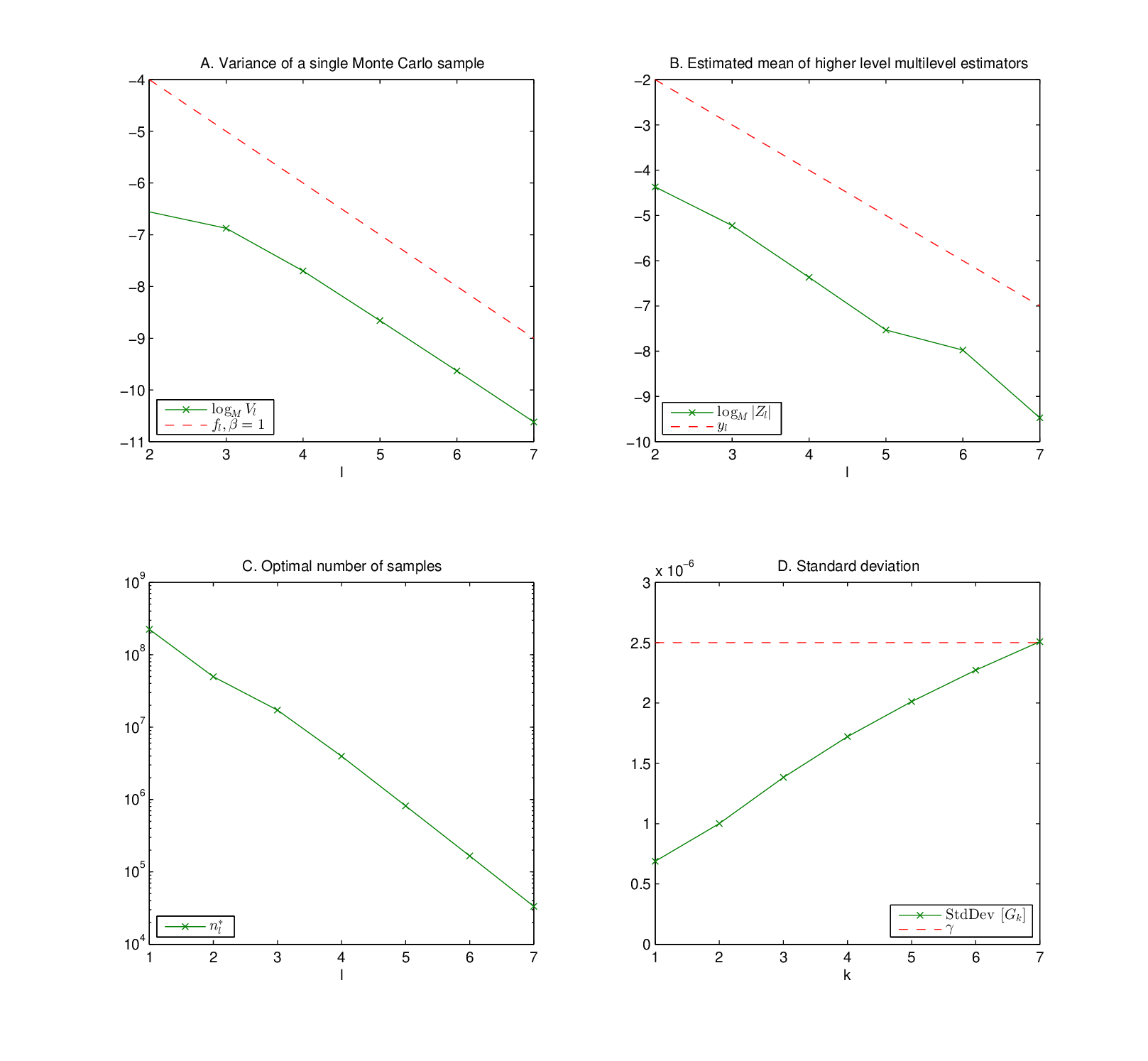}}
\caption{
Multilevel results for the expected loss in the equity tranche of a CDO basket consisting of $N_k$ companies, $N_k=M^k=5^k$, $k=1, \ldots, 7$, where the recovery rate is random and given by \eqref{rproc}. The quantities refer to the standard estimator $Z_l$ from (\ref{mlestisimple}).
A.~Variance of a single Monte Carlo sample, $V_l$ together with a predicted trend, $f_l$, given by \eqref{hk}, where $\beta=1$.
B.~Mean at level $l$, $Z_l$ and a trend, $y_l$, defined by \eqref{yk}, with slope -1. C.~Optimal number of simulations, $n_l^*$,
calculated according to  \eqref{OptimalNl} for $k=K=7$. D.~Standard deviation of multilevel estimators $G_k$  defined in \eqref{multiest} with chosen upper bound, $\gamma$.
}
\label{fig_Multilevel_Resampling_RR_Trend_Final}
\end{figure}

The results in Fig.~\ref{fig_Multilevel_Resampling_RR_Trend_Final} are presented in the same format as Fig.~\ref{fig_Multilevel_Resampling} earlier for the constant recovery rate. There is clear evidence that the convergence of the variance and mean are still both of first order in $1/N$, where $N$ is the basket size.

\bigskip

Another extension, also proposed in \cite{AndersenSidenius}, are \emph{random factor loadings} of the type
\begin{eqnarray}
X_t^i &=& X_0^i+\beta t+ \nu \; W_t^i+ a(B_t) \; B_t, \quad t>0,
\end{eqnarray}
where $a$ is a given deterministic function and $\nu^2 = 1-Var(a(B_t) B_t)$. This model can capture contagion 
effects where, for decreasing $a$, firm values are more closely correlated to the common market factor in bad 
times. Such an extension fits directly in the general framework developed earlier in this paper, assuming the 
technical conditions on the cumulative density $F_L$ hold where needed (see in particular Theorem \ref{OldTheorem}).

\subsection*{Outlook}

We would expect there to be scope to apply the presented nested simulation approach to a wider range of 
settings beyond the particular application studied here.
An interesting extension would be to the model from \cite{GieseckeEtAl12}, where the 
analysis requires further tools accounting especially for the heterogeneity of the basket, resulting in non-exchangeability.
While our motivation comes from credit baskets and some of the later results are specific to piecewise linear 
functionals encountered in the valuation of basket credit derivatives, we hope there to be a wider relevance of the 
main approach to the simulation of certain functionals arising in large interacting particle systems and elsewhere.

\bigskip
\bigskip

\noindent
{\bf Acknowledgement:} We thank Mike Giles for suggesting the improved estimator, and two anonymous referees for their helpful suggestions
which improved the presentation of the results. 

\bibliographystyle{siam}

\appendix
\section{Moment computations}
\label{app:moments}
\begin{proof}

[of Lemma \ref{nowelemma}]
We begin by showing (\ref{fourtha}) and then deduce (\ref{fourthb}) and (\ref{fourthc}).
We have 
\[
\mid \Pl-P \mid \;  \leq  \; \mid \Ll-L \mid,
\]
where
\begin{eqnarray}
\Ll= \frac{1}{N_l}\sum_{i=1}^{N_l} \Yi.  \nonumber 
\end{eqnarray}
Hence, we get
\begin{equation*} 
\be_{|L} [(\Pl-P)^4] \leq \be_{|L} [(\Ll-L)^4] = \be_{|L} \bigg[ \bigg(\frac{1}{N_l} \sum_{i=1}^{N_l} (Y_i-L)\bigg)^4\bigg].
\end{equation*}
As $L$ is $\fm$-measurable and the $Y_i$ are independent and identically distributed given $\fm$
with $\be_{|L}[Y_i-L] = 0$, we have
\begin{eqnarray*}
\be_{|L} [(\Ll-L)^4] &=& \frac{1}{N_l^4} \be_{|L} \bigg[ \sum_{i=1}^{N_l} (Y_i-L)^4 +
6\sum_{i\neq j} (Y_i-L)^2(Y_j-L)^2 \bigg] \\
&=& \frac{1}{N_l^3} \left( (1-L)^4L+L^4(1-L)\right)+ \frac{3(N_l-1)}{N_l^3} \left((1-L)^2L+L^2(1-L)\right)^2 \\
&=& \frac{3L^2(1-L)^2}{N_l^2} + \frac{L(1-L)(1-6L(1-L))}{N_l^3}.
\end{eqnarray*}
Using the fact that $0\leq L(1-L) \leq 1/4$ we have the required bound in (\ref{fourtha}).

For (\ref{fourthb}), observe that there are many ways of estimating this fourth moment; we choose 
the following
\begin{eqnarray}
(\Pl-\Plminusone)^4 &=& \left((\Pl-P)-(\Plminusone-P)\right)^4 \nonumber \\
&\leq & 2 \; \left( (\Pl-P)^4 + (\Plminusone-P)^4 \right) + 12 (\Pl-P)^2 (\Plminusone-P)^2. \nonumber
\end{eqnarray}
Therefore, using Cauchy-Schwarz on the last term and applying (\ref{fourtha}) we have
\begin{eqnarray*}
\be_{|L}[(\Pl-\Plminusone)^4] &\leq & 2 \left( \be_{|L} [(\Pl-P)^4] + \be_{|L} [(\Plminusone-P)^4]
\right) \\
& & \qquad \qquad + \; 12 \; \be_{|L} [(\Pl-P)^4]^{1/2} \, \be_{|L} [(\Plminusone-P)^4] ^{1/2}
  \\
&\leq & \frac{7}{8N_l^2} +  \frac{7}{8N_{l-1}^2} + \frac{42}{8N_lN_{l-1}}
\end{eqnarray*}
as required to obtain (\ref{fourthb}).

Finally, (\ref{fourthc}) follows from
\begin{eqnarray*}
\be_{|L} [(\Pl-\Plbar)^4] &=& \be_{|L} \bigg[ \bigg( \frac{1}{M} \sum_{m=1}^{M} (\Pl-\Plm)  \bigg)^4 \bigg] \\
&\le& \be_{|L} \Big[\frac{1}{M} \sum_{m=1}^{M} (\Pl-\Plminusone)^4\Big] \\
&=& \be_{|L} [(\Pl-\Plminusone)^4],
\end{eqnarray*} 
and an application of (\ref{fourthb}).
\end{proof}

\end{document}